\newtheorem{theorem}{Theorem}[section]
\newtheorem{lemma}[theorem]{Lemma}
\newtheorem{proposition}[theorem]{Proposition}
\newtheorem{corollary}[theorem]{Corollary}
\numberwithin{equation}{section}
\renewcommand{\l}{\lambda}
\newcommand{\g}{{\rm g}}
\newcommand{\RR}{\ensuremath{\mathbb{R}}}
\newcommand{\CC}{\ensuremath{\mathbb{C}}}
\newcommand{\prtl}{\ensuremath{\partial}}
\newcommand{\hf}{\ensuremath{\frac{1}{2}}}
\newcommand{\thetabar}{\overline\theta}
\newcommand{\Rn}{{\mathbb R}^n}
\newcommand{\R}{{\mathbb R}}
\title[Strichartz estimates on manifolds with boundary]{Strichartz estimates for the wave equation on manifolds with boundary}
\thanks{The authors were supported by the National Science
Foundation, Grants DMS-0654415 and DMS-0099642.}
\author{Matthew D. Blair}
\address{Department of Mathematics, University of New Mexico, Albuquerque, NM 87131}\email{blair@math.unm.edu}
\author{Hart F. Smith}
\address{Department of Mathematics, University of Washington,
Seattle, WA 98195} \email{hart@math.washington.edu}
\author{Christopher D. Sogge}
\address{Department of Mathematics, Johns Hopkins University,
Baltimore, MD 21218} \email{sogge@jhu.edu}
\begin{document}

\maketitle

\section{Introduction}\label{S:intro}
Let $(M,\g)$ be a Riemannian manifold of dimension $n \geq 2$.
Strichartz estimates are a family of space time integrability
estimates on solutions $u(t,x): (-T,T) \times M \to \CC$ to the
wave equation
\begin{align}\label{E:cprob}
\prtl_{t }^2 u(t,x) - \Delta_\g u(t,x) &= 0\,,
& & u(0,x) = f(x)\,, & & \prtl_t u(0,x) = g(x)
\end{align}
where $\Delta_\g$ denotes the Laplace-Beltrami operator on
$(M,\g)$. Local homogeneous Strichartz estimates state that
\begin{equation}\label{E:freestz}
\|u\|_{L^p((-T,T); L^q(M))} \leq C\left(\|f\|_{H^\gamma(M)} +
\|g\|_{H^{\gamma-1}(M)}
\right)
\end{equation}
where $H^\gamma$ denotes the $L^2$ Sobolev space over $M$ of order
$\gamma$, and $2 \leq p \leq \infty$, $2 \leq q <\infty$ satisfy
\begin{equation}\label{E:stzpair}
\frac 1p + \frac nq = \frac n2-\gamma
\qquad\qquad 
\frac 2p + \frac {n-1}q \leq \frac{n-1}2
\end{equation}
Estimates involving $q=\infty$ hold when $(n,p,q) \neq
(3,2,\infty)$, but typically require the use of Besov spaces.

Strichartz estimates are well established on flat Euclidean space,
where $M=\RR^n$ and $\g_{ij}=\delta_{ij}$. In that case, one can
obtain a global estimate with $T=\infty$; see for example
Strichartz~\cite{strich77}, Ginibre and Velo~\cite{ginvelo95},
Lindblad and Sogge~\cite{lindsogge95}, Keel and
Tao~\cite{keeltao98}, and references therein.  However, for
general manifolds phenomena such as trapped geodesics and
finiteness of volume can preclude the development of global
estimates, leading us to consider local in time estimates.

If $M$ is a compact manifold without boundary, finite speed of
propagation shows that it suffices to work in coordinate charts,
and to establish local Strichartz
estimates for variable coefficient wave operators on $\RR^n$. Such
inequalities were developed for operators with smooth coefficients
by Kapitanski~\cite{kapitanski91} and
Mockenhaupt-Seeger-Sogge~\cite{mss93}.  In this context one has
the Lax parametrix construction, which yields the appropriate
dispersive estimates.  Strichartz estimates for operators with 
$C^{1,1}$ coefficients
were shown by the second author in~\cite{SmC2stz} and by Tataru
in~\cite{T3}, the latter work establishing the full range of local
estimates.  Here the issue is more
intricate as the lack of smoothness prevents the use of the
Fourier integral operator machinery.  Instead, wave packets or
coherent state methods are used to construct parametrices for the
wave operator.

In this work, we consider the establishment of Strichartz estimates on a
manifold with boundary, assuming that the solution satisfies
either Dirichlet or Neumann homogeneous boundary conditions.
Strichartz estimates for certain values of $p,q$ were established
by Burq-Lebeau-Planchon \cite{blp} using results from \cite{SmSoBdry}; 
our work expands the range of indices $p$ and $q$, and includes new
estimates of particular interest for the critical nonlinear wave equation
in dimensions 3 and 4.
Our main result concerning Strichartz estimates is the following.

\begin{theorem}~\label{T:homogstz} Let $M$ be a compact Riemannian
manifold with boundary.  Suppose $2 < p \leq \infty$, $2 \leq q
< \infty$ and $(p,q, \gamma)$ is a triple satisfying
\begin{align}\label{E:hompair}
\frac{1}{p} + \frac nq = \frac n2-\gamma & & \begin{cases} \frac
3p + \frac {n-1}q \leq \frac{n-1}2, & n \leq 4\\
\frac 1p + \frac 1q \leq \frac 12, & n \geq 4
\end{cases}
\end{align}
Then we have the following estimates for solutions $u$
to~\eqref{E:cprob} satisfying either Dirichlet or Neumann
homogeneous boundary conditions
\begin{equation}\label{E:homogstz}
\|u\|_{L^p([-T,T]; L^{q} (M))} \leq C\left(\,\|f\|_{H^\gamma(M)} +
\|g\|_{H^{\gamma-1}(M)}
\right)
\end{equation}
with $C$ some constant depending on $M$ and $T$.
\end{theorem}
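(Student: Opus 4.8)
\emph{Outline of the argument.}
The plan is to reduce, by finite speed of propagation and reflection across the boundary, to a Strichartz estimate for the wave equation with a \emph{conormal Lipschitz} metric on $\RR^n$, and then to establish the latter by a wave-packet parametrix whose only loss occurs at grazing reflections off the singular hypersurface. We treat the Dirichlet case; Neumann is identical with even extensions replacing the odd ones below.

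\textbf{Reduction to a uniform frequency-localized local estimate.} Write $u=\cos(t\sqrt{-\Delta_\g})f+\sin(t\sqrt{-\Delta_\g})(-\Delta_\g)^{-1/2}g$, so that it is enough to bound $e^{\pm it\sqrt{-\Delta_\g}}$ from $L^2(M)$ into $L^p([-T,T];L^q(M))$ with a loss of $\gamma$ derivatives. Decomposing the data by a Littlewood--Paley partition adapted to $\sqrt{-\Delta_\g}$, and using Minkowski's inequality (legitimate since $p,q\ge2$) together with the Littlewood--Paley inequality for the Dirichlet Laplacian on $M$, the theorem reduces to proving, for data $f,g$ with spectrum in $\{|\tau|\sim\lambda\}$,
\begin{equation*}
\|u\|_{L^p([-T,T];L^q(M))}\le C\lambda^{\gamma}\bigl(\|f\|_{L^2(M)}+\lambda^{-1}\|g\|_{L^2(M)}\bigr)
\end{equation*}
with $C$ independent of the dyadic parameter $\lambda\ge1$. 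By finite speed of propagation and a partition of unity we may take $\supp f\cup\supp g$ in a coordinate ball $B$ of radius comparable to a small $T$, and then iterate over $[-T,T]$. If $B\cap\prtl M=\emptyset$ we are in the boundaryless setting and the bound follows from the variable-coefficient Strichartz estimates of Mockenhaupt--Seeger--Sogge, Smith and Tataru \cite{mss93,SmC2stz,T3}, since \eqref{E:hompair} lies inside the classical admissible range \eqref{E:stzpair}. Hence we may assume $B$ is centered at a point of $\prtl M$.

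\textbf{Reflection across the boundary.} In boundary normal coordinates about that point, $M$ is $\{x_n\ge0\}$ and $\g=dx_n^2+h_{ij}(x',x_n)\,dx^idx^j$ with $h$ smooth. Put $\tilde h_{ij}(x',x_n):=h_{ij}(x',|x_n|)$; then $\tilde\g=dx_n^2+\tilde h_{ij}\,dx^idx^j$ is a Lipschitz metric on a full neighborhood of the origin in $\RR^n$, smooth on each side of the hyperplane $\Sigma=\{x_n=0\}$ and conormal to $\Sigma$ (only $\prtl_{x_n}\tilde h$ jumps across $\Sigma$, while vector fields tangent to $\Sigma$ preserve boundedness). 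Extending $f,g$ to be odd in $x_n$, uniqueness together with the $x_n\mapsto-x_n$ symmetry shows that the solution of $(\prtl_t^2-\Delta_{\tilde\g})\tilde u=0$ with these data is odd, hence vanishes on $\Sigma$ and realizes the Dirichlet condition, and restricts to $u$ on $\{x_n\ge0\}$. Thus it suffices to prove the frequency-localized estimate above for the wave equation attached to a Lipschitz metric on $\RR^n$ that is smooth off a hyperplane and conormal to it.

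\textbf{Parametrix away from $\Sigma$, and the grazing obstruction.} Rescaling $(t,x)\mapsto(\lambda t,\lambda x)$ converts the problem to one for unit-frequency data and a metric $\g_\lambda(x)=\tilde\g(x/\lambda)$ which, away from $\Sigma$, is smooth and slowly varying (derivatives $O(\lambda^{-1})$), on a time interval of length $\sim\lambda T$; the singular hypersurface remains $\Sigma$. Decompose the data into wave packets (coherent states). Away from $\Sigma$ one runs the wave-packet parametrix of Smith and Tataru for slowly varying metrics; the only new feature is the interaction of a packet with $\Sigma$. When a packet meets $\Sigma$ \emph{transversally}, the elementary reflection/transmission analysis at an interface shows it produces only finitely many wave packets of comparable quality, whose subsequent evolution is again governed by the smooth-coefficient parametrix, so the portion of $u$ built from such packets obeys Strichartz estimates in the full classical range. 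The main obstacle — and the reason the region in \eqref{E:hompair} is strictly smaller than \eqref{E:stzpair} — is the \emph{grazing} (glancing) set: a packet incident on $\Sigma$ at near-tangential angle can split into many reflected packets which subsequently refocus and form caustics, so the sharp dispersive decay breaks down there. For the part of $u$ microlocalized near this set we abandon the dispersive parametrix and instead invoke the spectral-cluster and square-function $L^q$ bounds for the Dirichlet Laplacian on a manifold with boundary from \cite{SmSoBdry}, whose loss is precisely calibrated to the exponent constraints in \eqref{E:hompair}; interpolating these with the energy identity and adding the transversal contribution yields the frequency-localized bound on the stated range. Undoing the rescaling and summing over dyadic $\lambda$ then gives \eqref{E:homogstz}. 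I expect the delicate point to be exactly this matching of the grazing loss to the range \eqref{E:hompair}; the restriction is moreover essentially forced, since model computations for reflected rays that refocus (for instance inside a ball) show the dispersive decay genuinely degrades at grazing.
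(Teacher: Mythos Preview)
Your reductions---finite propagation speed, boundary-normal coordinates, even/odd reflection to a Lipschitz metric on $\RR^n$, Littlewood--Paley localization, and the identification of the grazing set as the enemy---are all correct and match the paper's setup. The gap is in your treatment of the grazing regime. You propose to ``abandon the dispersive parametrix and instead invoke the spectral-cluster and square-function $L^q$ bounds \dots\ from \cite{SmSoBdry}, whose loss is precisely calibrated to the exponent constraints in \eqref{E:hompair}.'' This is not so: the square-function estimates of \cite{SmSoBdry} are $L^q_xL^2_t$ bounds, not $L^p_tL^q_x$ bounds, and converting one to the other via Sobolev embedding in $t$ (which is essentially what Burq--Lebeau--Planchon \cite{blp} did) yields a strictly smaller range than \eqref{E:hompair}---in dimension $3$ it gives only $p,q\ge 5$, missing for instance the $L^4_tL^{12}_x$ estimate that is the main point of the theorem. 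Your claim that the loss is ``precisely calibrated'' to \eqref{E:hompair} is simply false as stated.

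What the paper actually does, and what your outline is missing, is a dyadic decomposition in the \emph{angle} $\theta$ between the frequency and the tangent space to $\partial M$ (i.e.\ in $\xi_n/|\xi|$), followed by a $\theta$-dependent dispersive estimate proved directly from the wave-packet parametrix of \cite{SmSoBdry}. For the piece $u_j$ at angle $\theta_j$, the parametrix is valid on time slabs of length $\theta_j$, and on each slab one proves a dispersive bound with an extra gain of $\theta_j^{\sigma(p,q)}$, where $\sigma(p,q)=(n-1)(\tfrac12-\tfrac1q)-\tfrac2p$ for $n\le 4$ and $\sigma(p,q)=\tfrac12-\tfrac1q$ for large $n$; this gain comes from the small volume of the frequency cone at angle $\theta_j$. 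Summing over the $\theta_j^{-1}$ slabs costs $\theta_j^{-1/p}$, and summing over dyadic $\theta_j$ requires $\sigma(p,q)\ge 1/p$, which is \emph{exactly} the condition \eqref{E:hompair}. Your transversal/grazing dichotomy is too coarse to see this: the entire range $\lambda^{-1/3}\lesssim\theta\lesssim 1$ has to be handled by this graded analysis, not by a binary split into ``good transversal'' and ``bad grazing'' pieces. The missing ingredient is therefore the new time-dispersive estimate (the analogue of \cite[(7.2)]{SmSoBdry} with $t$-decay replacing $x$-decay), not a black-box appeal to the results of \cite{SmSoBdry}.
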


A lemma of Christ-Kiselev \cite{CK} 
allows one to deduce inhomogeneous Strichartz
estimates from the homogeneous estimates. In the following corollary, 
$(r',s')$ are
the H\"older dual exponents to $(r,s)$, and the assumptions imply that
a homogeneous $(H^{1-\gamma},H^{-\gamma})\rightarrow L^rL^s$ holds.

\begin{corollary}~\label{T:inhomogstz} Let $M$ be a compact Riemannian
manifold with boundary. Suppose that the triples $(p,q,\gamma)$ and
$(r',s',1-\gamma)$ satisfy the conditions of Theorem \ref{T:homogstz}.
Then we have the following estimates for solutions $u$
to~\eqref{E:cprob} satisfying either Dirichlet or Neumann
homogeneous boundary conditions
\begin{equation*}
\|u\|_{L^p([-T,T]; L^{q} (M))} \leq C\left(\,\|f\|_{H^\gamma(M)} +
\|g\|_{H^{\gamma-1}(M)} + \|F\|_{L^r([-T,T]; L^s(M))}
\right)
\end{equation*}
with $C$ some constant depending on $M$ and $T$.
\end{corollary}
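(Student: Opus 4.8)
The plan is to expand $u$ by Duhamel's formula and reduce matters to a bound on the forcing term, which then follows from the homogeneous estimates of Theorem~\ref{T:homogstz} together with the Christ--Kiselev lemma. Let $\Lambda$ denote $(-\Delta_\g)^{1/2}$, defined through the spectral theorem for the Dirichlet or Neumann realization of $-\Delta_\g$, so that the solution of $\prtl_t^2 u - \Delta_\g u = F$ with $u(0)=f$, $\prtl_t u(0)=g$ and the appropriate boundary condition is
\[
u(t) \;=\; \cos(t\Lambda) f \;+\; \frac{\sin(t\Lambda)}{\Lambda}\,g \;+\; \int_0^t \frac{\sin((t-s)\Lambda)}{\Lambda}\,F(s)\,ds .
\]
The first two terms are controlled in $L^p([-T,T];L^q(M))$ directly by Theorem~\ref{T:homogstz}, so it remains to bound the retarded integral $v(t)=\int_0^t \Lambda^{-1}\sin((t-s)\Lambda)\,F(s)\,ds$. (On the finite-dimensional kernel of $-\Delta_\g$, nontrivial only under Neumann conditions, the operator $\Lambda^{-1}\sin(\tau\Lambda)$ is interpreted as multiplication by $\tau$, and the resulting contribution $\int_0^t(t-s)\Pi_0 F(s)\,ds$, with $\Pi_0$ the projection onto that kernel, is bounded in $L^p([-T,T];L^q(M))$ by $C\|F\|_{L^1([-T,T];L^1(M))}\le C\|F\|_{L^r([-T,T];L^s(M))}$ on the compact time interval; so we may work on the orthogonal complement of $\ker(-\Delta_\g)$, where $\Lambda$ is invertible.)

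Next I would record the homogeneous estimates in half-wave form. Since the solution of $\prtl_t^2 u - \Delta_\g u = 0$ with data $f=h$, $g=\pm i\Lambda h$ is $e^{\pm it\Lambda}h$, and since $\|h\|_{H^\gamma(M)}+\|\Lambda h\|_{H^{\gamma-1}(M)}\approx\|h\|_{H^\gamma(M)}$, Theorem~\ref{T:homogstz} applied with the triple $(p,q,\gamma)$ gives
\[
\bigl\|\,e^{\pm it\Lambda}h\,\bigr\|_{L^p([-T,T];L^q(M))}\;\le\; C\,\|h\|_{H^\gamma(M)},
\]
and the same argument with the triple $(r',s',1-\gamma)$, which by hypothesis satisfies \eqref{E:hompair}, gives $\|e^{\pm it\Lambda}h\|_{L^{r'}([-T,T];L^{s'}(M))}\le C\|h\|_{H^{1-\gamma}(M)}$. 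Dualizing this last estimate with respect to the $L^2(M)$ pairing, using $(e^{it\Lambda})^\ast=e^{-it\Lambda}$ and $(H^{1-\gamma})^\ast=H^{\gamma-1}$, yields, for either choice of sign,
\[
\Bigl\|\int_{-T}^{T} e^{\pm is\Lambda}G(s)\,ds\Bigr\|_{H^{\gamma-1}(M)}\;\le\; C\,\|G\|_{L^{r}([-T,T];L^{s}(M))}.
\]

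Then I would write $\Lambda^{-1}\sin((t-s)\Lambda)=\tfrac{1}{2i}\bigl(e^{i(t-s)\Lambda}-e^{-i(t-s)\Lambda}\bigr)\Lambda^{-1}$ and treat the two half-wave pieces separately. For the $e^{+i(t-s)\Lambda}$ piece the associated \emph{non-retarded} operator is $F\mapsto e^{it\Lambda}\bigl(\int_{-T}^{T}e^{-is\Lambda}\Lambda^{-1}F(s)\,ds\bigr)$; since $\Lambda^{-1}$ commutes with $e^{-is\Lambda}$ and maps $H^{\gamma-1}(M)$ to $H^{\gamma}(M)$, the dualized estimate above bounds the $H^\gamma(M)$-norm of the inner integral by $C\|F\|_{L^r([-T,T];L^s(M))}$, and then the half-wave estimate for $(p,q,\gamma)$ bounds its image in $L^p([-T,T];L^q(M))$ by the same quantity; the $e^{-i(t-s)\Lambda}$ piece is identical. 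Thus each non-retarded operator is bounded from $L^r([-T,T];L^s(M))$ to $L^p([-T,T];L^q(M))$. The hypotheses of Theorem~\ref{T:homogstz} force $r'>2$ and $p>2$, hence $r<2<p$ and in particular $r<p$, so the Christ--Kiselev lemma~\cite{CK} applies on $[-T,T]$ (after splitting into $[-T,0]$ and $[0,T]$, the discarded pieces being of the harmless form $e^{it\Lambda}(\text{fixed }H^\gamma\text{ element})$) and promotes each non-retarded operator to its retarded counterpart with no loss. Summing gives $\|v\|_{L^p([-T,T];L^q(M))}\le C\|F\|_{L^r([-T,T];L^s(M))}$, which combined with the bound on the first two terms of the Duhamel formula proves the corollary.

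There is no essential difficulty here: the argument is the by-now standard Christ--Kiselev deduction of inhomogeneous estimates from homogeneous ones. The only points that need care are bookkeeping: checking that the index conditions \eqref{E:hompair} for the two triples do force the strict inequality $r<p$ required by Christ--Kiselev; matching the Sobolev orders $\gamma$ and $1-\gamma$ correctly under duality and under the smoothing operator $\Lambda^{-1}$; and disposing of the kernel of $-\Delta_\g$ in the Neumann case, which as noted above is finite-dimensional and contributes only a term estimated crudely on the compact interval $[-T,T]$.
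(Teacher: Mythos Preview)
Your argument is correct and is precisely the standard Christ--Kiselev deduction that the paper has in mind: the paper does not give a self-contained proof but simply refers to the Christ--Kiselev lemma \cite{CK} and to Theorem~3.2 of \cite{SmSoGlobal} for the details, noting that the argument there applies equally to Neumann conditions. Your write-up supplies exactly those details---Duhamel, half-wave splitting, duality on the $(r',s',1-\gamma)$ estimate, and the observation that the hypotheses force $r<2<p$ so that Christ--Kiselev applies---so there is nothing to add.
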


For details on the proof of Corollary \ref{T:inhomogstz} using Theorem
\ref{T:homogstz} and the Christ-Kiselev lemma we refer to Theorem 3.2
of \cite{SmSoGlobal}, which applies equally well to Neumann conditions.

By finite speed of propagation, our results also apply to
noncompact manifolds, provided that there is uniform control
over the size of the metric and its derivatives in appropriate coordinate
charts.  In particular, we obtain local in time Strichartz estimates for
the exterior in $\RR^n$ of a compact set with smooth boundary,
for metrics $\g$ which agree with the Euclidean metric outside a compact
set. In this case one can obtain global in time Strichartz estimates
under a nontrapping assumption. We refer to \cite{SmSoGlobal}
for the case of odd dimensions, and Burq \cite{Burq} and Metcalfe \cite{Met}
for the case of even dimensions.  See also \cite{HMSSZ}.

For a manifold with strictly geodesically-concave boundary,
the Melrose-Taylor parametrix yields the Strichartz
estimates, for the larger range of exponents in (1.3) (not including endpoints) 
as was shown in~\cite{SmSoCrit}.
If the concavity assumption is removed, however,
the presence of multiply reflecting geodesics and their limits,
gliding rays, prevent the construction of a similar parametrix.

Recently,  Ivanovici \cite{Iv} has shown that, when $n=2$, (1.5) cannot hold for the full range of exponents in (1.3).  Specifically, she showed that if $M\subset \R^2$ is a compact convex domain with smooth boundary then (1.5) cannot hold when $q>4$ if $2/p+1/q=1/2$.  It would be very interesting to determine the sharp range of exponents for (1.5) in any dimension $n\ge2$.

The Strichartz estimates of Tataru \cite{T3} for Lipschitz metrics
yield estimates in the boundary case, but with a
strictly larger value of $\gamma$. The approach of \cite{T3} involves
the construction of parametrices which apply over short time intervals
whose size depends on frequency. Taking
the sum over such sets generates a loss of
derivatives in the inequality.

These ideas influenced the development of the spectral cluster
estimates for manifolds with boundary appearing
in~\cite{SmSoBdry}.  Such estimates were established through
squarefunction inequalities for the wave equation, which control
the norm of $u(t,x)$ in the space $L^q(M; L^2(-T,T))$. These
spectral cluster estimates were used in the
work of Burq-Lebeau-Planchon~\cite{blp} to establish Strichartz
estimates 
for a certain range of triples $(p,q,\gamma)$.
The range of triples that can be obtained in this manner, however, 
is restricted by the
allowed range of $q$ for the squarefunction estimate. In dimension
3, for example, this restricts the indices to $p,q\ge 5$. 
In~\cite{blp} similar estimates involving $W^{s,q}$ spaces were
also established, and used in conjunction with the Strichartz estimates
and boundary trace arguments
to establish global well-posedness for the critical semilinear
wave equation for $n=3$.  In the last two sections of this paper we shall present some new results
concerning critical semilinear wave equations.  Specifically, we shall obtain local well-posedness and global existence for small data when $n=4$, as well as a natural scattering result for $n=3$.

The approach of this paper instead adapts the proof of the
squarefunction inequalities in~\cite{SmSoBdry}. We utilize the
parametrix construction of that paper, and establish
the appropriate time-dispersion bounds on the associated
kernel. This allows us to obtain the Strichartz
estimates for a wider range of triples, including, for example,
the important $L^4((-T,T);L^{12}(M))$ estimate in dimension 3,
and the $L^3((-T,T);L^6(M))$ estimate in dimension 4.

The key observation in~\cite{SmSoBdry}
is that $u$ satisfies better estimates if it is
microlocalized away from directions tangent to $\prtl M$ than if
it is microlocalized to directions nearly tangent to $\prtl M$. This is
due to the fact that one can construct parametrices
over larger time intervals as one moves to
directions further away from tangent to $\prtl M$. More precisely,
the parametrix for directions at angle $\approx\theta$ away
from tangent to $\prtl M$ 
applies for a time interval of size $\theta$, which would
normally yield a $\theta$-dependent loss in the estimate.  However,
this loss can be countered by the fact that such directions
live in a small volume cone in frequency space.
For {\it sub-critical} estimates, i.e.~ where strict inequality
holds in the second condition in \eqref{E:stzpair}, this frequency
localization leads to a gain for small $\theta$. The restriction
on $p$ and $q$ in Theorem \ref{T:homogstz} arises from requiring
this gain to counteract the loss from adding over the $\theta^{-1}$
disjoint time intervals on which one has estimates.
Hence, while the range of $p$ and $q$
in our theorem is not known to be optimal, the restrictions
are naturally imposed by the local nature of the parametrix construction
in~\cite{SmSoBdry}.

\subsection*{Notation} The expression $X \lesssim Y$ means
that $X \leq CY$ for some $C$ depending only on the manifold,
metric, and possibly the triple $(p,q,\gamma)$ under
consideration. Also, we abbreviate $L^p(I;L^q(U))$  
by $L^p L^q(I \times U)$.

\section{Homogeneous Strichartz Estimates}\label{S:estimates}
The proof of Strichartz estimates is a direct adaptation of
the proof of squarefunction estimates in \cite{SmSoBdry}. The difference
is that Strichartz estimates result from time decay of
the wave kernel, whereas squarefunction estimates result from decay
with respect to spatial separation. Consequently, in \cite{SmSoBdry} the
wave equation was conically localized in frequency so as to become
hyperbolic with respect to a space variable labelled $x_1$, and
the equation factored so as to make $x_1$ the evolution parameter. 

In order to maintain the convention that 
$x_1$ is the evolution parameter, in this section
we set $x_1=t$, and will use $x'=(x_2,\ldots,x_{n+1})$ to denote spatial
variables in $\RR^n$. Thus $x=(x_1,x')$ is a variable on $\RR^{1+n}$.

We work in a geodesic-normal coordinate patch near $\partial M$ 
in which $x_n\ge 0$ equals distance to the boundary (the estimates
away from $\partial M$ follow from \cite{kapitanski91} and \cite{mss93}).
The coefficients of the metric $\g_{ij}(x')$ are extended to $x_n<0$
in an even manner, and the solution $u(x)$ is extended evenly in the
case of Neumann boundary conditions, and oddly in $x_n$ in case of Dirichlet
conditions. The extended solution then solves the extended wave equation
on the open set obtained by reflecting the coordinate patch in $x_n$.

Setting 
 $a^{11}(x)=\sqrt{\det g_{ij}(x')}$, 
we now work with an equation
$$
\sum_{i,j=1}^{n+1}D_i a^{ij}(x')D_j u(x)=0
$$
on an open set symmetric in $x_n$.
A linear change of coordinates, and shrinking the
patch if necessary, reduces to considering coefficients $a^{ij}(x)$ which are
pointwise close to
the Minkowski metric on the unit ball in $\RR^{1+n}$, 
and defined globally so as to equal that
metric outside the unit ball.

Following \cite[\S 2]{SmSoBdry},
the solution $u$ is then localized in frequency to a conic set where
$|\xi'|\approx|\xi_1|$. On the complement of this set the operator
is elliptic, and the Strichartz estimates follow from elliptic
regularity and Sobolev embedding. As in section 7 of \cite{SmSoBdry}, one
uses the fact that the coefficients are smooth in all variables but $x_n$,
and Sobolev embedding can be accomplished
using at most one derivative in the $x_n$ direction. 

The next step is to take a Littlewood-Paley dyadic decomposition 
$u=\sum_{k=1}^\infty u_k$ with $\widehat u_k$ 
localized in frequency to shells $|\xi'|\approx 2^k$. 
One lets $a^{ij}_k(x)$ denote the coefficients frequency localized in
the $x'$ variables to $|\xi'|\le 2^k$, 
and factorizes
$$
\sum_{i,j=1}^{n+1}a^{ij}_k(x)\xi_i\xi_j=a^{11}_k(x)
\bigl(\xi_1+p_k(x,\xi')\bigr)\bigl(\xi_1-p_k(x,\xi')\bigr)\,,
$$
where $p_k(x,\xi')\approx |\xi'|$.
Just as in [25, \S 2], (and the higher dimensional modifications in [25, \S 7]),
Theorem \ref{T:homogstz}
is reduced to establishing,
uniformly over $\l=2^k$, bounds of the form
\begin{equation}\label{E:theorem1'}
\|u_\l\|_{L^p_{x_1}L^q_{x'}(|x|\le 1)}\lesssim 
\l^\gamma\bigl(\|u_\l\|_{L^\infty L^2}+\|F_\l\|_{L^2}\bigr)\,,\quad
D_1 u_\lambda-P_\l(x,D')u=F_\lambda\,.
\end{equation}
Here, $P_\l(x,D')=\frac 12p_\l(x,D')+\frac 12p_\l(x,D')^*$, and
the symbol $p_\l(x,\xi')$ can be taken frequency localized in $x'$ frequencies
to $|\xi'|\le \l$, and $p_\l(x,\xi')=|\xi'|$ if $|\xi'|\not\approx\l$.

The setup is now the same as in \cite{SmSoBdry}, and the reductions of
\S3-\S6 of that paper, specifically their $n$-dimensional analogues
of \S7, apply directly. 
This starts with a 
decomposition $u_\l=\sum_j u_j$ corresponding to a
dyadic decomposition of $\widehat u_\l(\xi)$
in the $\xi_n$ variable to regions $\xi_n\in [2^{-j-2}\l,2^{-j+1}\l]$
where $\l^{-1/3}\le 2^{-j}\le 1$.

If $2^{-j}\ge \frac 18$, corresponding to non-tangential reflection,
then the estimates will follow as the case for $2^{-j}=\frac 18$, so
we restrict attention to the case $2^{-j}\le \frac 18$. Since 
$|\xi'|\approx\l$, this implies
that some remaining variable is $\approx\l$, and after rotation
we assume that $\widehat u_j(x_1,\xi')$ is supported in a set
$$
\{\xi:\xi_{n+1}\approx\l,\;
|\xi_j|\le c\l,\, j=2,\ldots,n-1,\;\mathit{and}\;\xi_n\approx\theta_j\l\}
$$
where $\l^{-1/3}\le\theta_j\le \frac 18$.

The proof establishes good bounds on the term $u_j$ over time intervals of
length $\theta_j$. Precisely, let $S_{j,k}$, 
$|k|\le\theta_j^{-1}$,
denote the time slice
$x_1\in[k\epsilon\theta_j,(k+1)\epsilon\theta_j]$.
In analogy with \cite[Theorem 3.1]{SmSoBdry}, we establish the bound
\begin{equation}\label{E:theorem1''}
\|u_j\|_{L^p_{x_1}L^q_{x'}(S_{j,k})}\lesssim
\l^\gamma\theta_j^{\sigma(p,q)}c_{j,k} 
\end{equation}
where $c_{j,k}$ satisfies the nested summability condition 
\cite[(3.1)]{SmSoBdry}, and where
$$
\sigma(p,q)=\begin{cases}
(n-1)(\tfrac 12-\tfrac 1q)-\tfrac 2p\,,
\quad&
(n-2)(\tfrac 12-\tfrac 1q) \le \tfrac 2p\\
\tfrac 12 - \tfrac 1q\,, 
&
(n-2)(\tfrac 12-\tfrac 1q) \ge \tfrac 2p
\end{cases}
$$
Adding over the $\theta_j^{-1}$ disjoint slabs intersecting $|x_1|\le 1$,
the simple uniform bounds on $c_{j,k}$ yield
$$
\|u_j\|_{L^p_{x_1}L^q_{x'}(|x_1|\le 1)}\lesssim \l^\gamma
\theta_j^{\sigma(p,q)-1/p}
\bigl(\,\|u_\l\|_{L^\infty L^2}+\|F_\l\|_{L^2}\bigr)\,.
$$
The $\theta_j$ take on dyadic values less than 1, and provided 
$\sigma(p,q)> 1/p$, one can sum over $j$ to obtain \eqref{E:theorem1'}.
In case $\sigma(p,q)=1/p$ one can also sum the series, using the
nested summability condition \cite[(3.1)]{SmSoBdry}, together with
the branching argument on \cite[page 118]{SmSoBdry},
to yield \eqref{E:theorem1'}.
Note that the restrictions on $(p,q)$ in Theorem \ref{T:homogstz}
are precisely that $\sigma(p,q)\ge 1/p$.

Estimate \eqref{E:theorem1''} is established through
the parametrix construction from \cite{SmC2spec}, together with the use
of the $V^p_2$ spaces of Koch-Tataru \cite{KT}. Precisely,
one rescales $\RR^{1+n}$ by $\theta_j$, and considers the symbol
$$
q(x,\xi')=\theta_j p_j(\theta_j x,\theta_j^{-1}\xi')\,,
$$
where $p_j$ is such that $\widehat q(x_1,\zeta,\xi')$ is supported in
$|\zeta|\le c\mu^{1/2}$, where $\mu=\theta_j\l$ is the
frequency scale at which $u_j(\theta_j x)$ is localized.
Fix $u(x)=u_j(\theta_jx)$ and $\theta_j=\theta$, where now
$1\ge\theta\ge\mu^{-1/2}$. One writes
$$
D_1 u-q(x,D')u=F+G
$$
where $G$ arises from the error term $(p-p_j)u_j$.
The bound \eqref{E:theorem1''} is a consequence of the following
bound (for a global $\varepsilon>0$)
\begin{multline}\label{E:mubound1}
\|u\|_{L^p_{x_1}L^q_{x'}(|x_1|\le \varepsilon)}\lesssim
\mu^{\gamma}\theta^{\sigma(p,q)}
\Bigl(\;\|u\|_{L^\infty L^2(S)}+
\|F\|_{L^1L^2(S)}\\
+\mu^{\frac 14}\theta^{\frac 12}
\|\langle \mu^{\frac 12}x_2\rangle^{-1}u\|_{L^2(S)}+
\mu^{-\frac 14}\theta_j^{-\frac 12}
\|\langle\mu^{\frac 12}x_2\rangle^2 G\|_{L^2(S)}
\,\Bigr)\,,
\end{multline}
and for $\theta=\mu^{-\frac 12}$
\begin{equation}\label{E:mubound2}
\|u\|_{L^p_{x_1}L^q_{x'}(|x_1|\le \varepsilon)}\lesssim
\mu^{\gamma}\theta^{\sigma(p,q)}
\Bigl(\;\|u\|_{L^\infty L^2(S)}+
\|F+G\|_{L^1L^2(S)}
\,\Bigr)\,.
\end{equation}

The solution $u$ is written as a superposition of terms,
each of which is product of $\chi_I(x_1)$, 
for an interval $I\subset[-\varepsilon,\varepsilon]$, with
a functions whose wave-packet transform is invariant under
the Hamiltonian flow of $q(x,\xi')$. The wave-packet transform,
which acts in the $x'$ variables, is a simple modification
of the Gaussian transform used by Tataru \cite{T3} to establish
Strichartz estimates for rough metrics; see also \cite{T}.
Precisely, set
$$
\bigl(T_\mu f\bigr)(x',\xi')=\mu^{n/4}\,
\int e^{-i\langle \xi',y'-x'\rangle}\,g\bigl(\mu^{\frac 12}(y'-x')\bigr)\,f(y')
\,dy'\,.
$$
The base function $g$ is taken to be of Schwartz class with $\widehat g$ 
supported in a ball of small radius. Thus, 
$\tilde u(x,\xi')=[T_\mu u(x_1,\cdot)](x',\xi')$ has the same
localization in $\xi'$ as does $\widehat u(x_1,\xi')$.

By Lemma 4.4 of \cite{SmSoBdry} one can write
$$
\Bigl(d_1-d_{\xi'} q(x,\xi')\cdot d_{x'}+d_{x'} q(x,\xi')\cdot d_{\xi'}\Bigr)
\tilde{u}(x,\xi')
=\tilde{F}(x,\xi')+\tilde{G}(x,\xi')\,.
$$
By variation of parameters and the use of $V^p_2$ spaces, one reduces
matters to establishing estimates for solutions invariant under the flow.
The use of the $V^p_2$ spaces from \cite{KT} requires $p>2$, which
is implied by the conditions of Theorem \ref{T:homogstz}.

Let $\Theta_{t,s}$ denote the Hamiltonian flow of $q(x,\xi')$,
from $x_1=s$ to $x_1=t$.
Then the bounds
\eqref{E:mubound1}-\eqref{E:mubound2} are consequences of the following,
which is the analogue of Theorem 7.2 of~\cite{SmSoBdry}.

\begin{theorem}\label{lastmaintheorem}
Suppose that $f\in L^2(\RR^{2n})$ is supported in
a set of the form 
$$
\{\xi:\xi_{n+1}\approx\mu\,,|\xi_j|\le c\mu\,,j=2,\ldots,n-1, \;\text{and}\;\;
\xi_n\approx\theta\mu\}
$$
or
$$
\{\xi:\xi_{n+1}\approx\mu\,,|\xi_j|\le c\mu\,,j=2,\ldots,n-1, \;\text{and}\;\;
|\xi_n|\le\mu^{\frac 12}\}
$$
in case
$\theta=\mu^{-1/2}$.

If
$\,W\!f(x_1,x')=T^*_\mu\bigl[f\circ\Theta_{0,x_1}\bigr](x')\,,$ then
for admissible $(p,q,\gamma)$
$$
\|W\!f\|_{L^p_{x_1}L^q_{x'}(|t|\le\varepsilon)}
\lesssim\mu^{\gamma}\theta^{\sigma(p,q)}
\,\|f\|_{L^2(\RR^{2n})}\,.
$$
\end{theorem}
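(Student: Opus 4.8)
The plan is to follow the scheme of \cite[\S 7]{SmSoBdry}, reducing the estimate to a fixed-time dispersive bound for the wave-packet kernel composed with the Hamiltonian flow, and then interpolating. First I would record the two basic ingredients. The energy bound $\|W\!f(x_1,\cdot)\|_{L^2_{x'}}\lesssim\|f\|_{L^2}$ holds uniformly in $x_1$, since $T_\mu$ is (up to constants) an isometry and the flow $\Theta_{0,x_1}$ preserves $L^2(\RR^{2n})$ as a measure-preserving map on phase space; this gives the $L^\infty_{x_1}L^2_{x'}$ endpoint with $\gamma=0$, $\sigma=0$ for free. The second ingredient is the dispersive estimate: for the integral kernel $K(x_1;x',y')$ of the operator $f\mapsto W\!f(x_1,\cdot)$ restricted to data with the stated frequency localization, one shows
$$
\|K(x_1;\cdot,\cdot)\|_{L^\infty_{x',y'}}\lesssim \mu^{n}\,\bigl(1+\mu|x_1|\bigr)^{-\frac{n-1}{2}}\bigl(1+\mu\theta|x_1|\bigr)^{-\frac12}
$$
on the time scale $|x_1|\le\varepsilon$. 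This is the analogue of the dispersive bound in \cite{SmSoBdry}, but with the spatial-separation decay there replaced by decay in $|x_1|$: the extra localization of $\xi_n$ to a band of width $\theta\mu$ (rather than $\mu$) costs one of the $(n-1)$ factors of $(1+\mu|x_1|)^{-1/2}$, upgrading it to the slower factor $(1+\mu\theta|x_1|)^{-1/2}$. The key point making this work is that the Hamiltonian flow $\Theta_{t,s}$ of $q(x,\xi')$ moves the spatial packets at unit speed in the $x_{n+1}$ direction but only at speed $\approx\theta$ in the $x_n$ direction, so the stationary-phase analysis in those two directions decouples and produces exactly these two dispersion rates, while in the intermediate directions $x_2,\dots,x_{n-1}$ the usual $(1+\mu|x_1|)^{-1/2}$ rate survives.

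Next I would run the abstract $TT^*$/Keel–Tao argument. Writing $Wf(x_1,\cdot)=\mu^{n/2}\widetilde W f(x_1,\cdot)$ after rescaling frequencies to unit size, the operator $\widetilde W$ satisfies $\|\widetilde W\|_{L^2\to L^\infty_t L^2}\lesssim 1$ and the fixed-time bound $\|\widetilde W(t)\widetilde W(s)^*\|_{L^1\to L^\infty}\lesssim (1+\mu|t-s|)^{-\frac{n-1}{2}}(1+\mu\theta|t-s|)^{-\frac12}$. The worst decay is $(1+\mu\theta|t-s|)^{-\frac{n-1}{2}}(1+\mu\theta|t-s|)^{-\frac12}$ near $t=s$ giving the ``non-dispersive'' exponent and the better rate $(\mu|t-s|)^{-\frac{n-1}{2}}(\mu\theta|t-s|)^{-\frac12}$ for separated times; splitting the time integration at the scale $|t-s|\approx\mu^{-1}\theta^{-1}$ and applying Keel–Tao (or Hardy–Littlewood–Sobolev and the Christ–Kiselev lemma) on each piece with the appropriate decay rate yields, for any admissible $(p,q)$, the bound with a power $\mu^{a(p,q)}\theta^{b(p,q)}$ on the right. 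A direct bookkeeping of the exponents — matching the Lebesgue scaling in the rescaled variables and then undoing the $\mu^{n/2}$ and the $\theta$-rescaling of $\RR^{1+n}$ — reproduces exactly $\mu^{\gamma}\theta^{\sigma(p,q)}$, with the two branches of $\sigma(p,q)$ corresponding precisely to whether the dominant contribution comes from the near-diagonal ($t\approx s$) regime or the dispersive regime, i.e.\ to the dichotomy $(n-2)(\tfrac12-\tfrac1q)\lessgtr\tfrac2p$. The hypothesis $p>2$ is exactly what is needed for the endpoint-free Keel–Tao argument and for the $V^p_2$ machinery invoked earlier.

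The main obstacle is establishing the refined dispersive bound for the kernel $K(x_1;x',y')$ with the two distinct decay rates, uniformly over the allowed range $\mu^{-1/2}\le\theta\le\tfrac18$ and over $|x_1|\le\varepsilon$. Here one cannot appeal to an explicit parametrix; instead one uses the wave-packet (Gaussian beam) representation of $\widetilde W f$ as a superposition of coherent states transported along $\Theta_{0,x_1}$, as in \cite{SmC2spec} and \cite{T3}. The flow is generated by a symbol $q(x,\xi')\approx|\xi'|$ which, after the $\theta$-rescaling, has $\widehat q$ supported in $|\zeta|\le c\mu^{1/2}$, so over the time interval $|x_1|\le\varepsilon$ the flow map is a bi-Lipschitz perturbation of its linearization with uniformly controlled derivatives; the curvature of the characteristic variety of $q$ — which is nondegenerate in the $x_{n+1}$ and the intermediate directions but degenerates at rate $\theta$ in the $x_n$ direction because of the frequency band — is what produces the stated decay. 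Concretely one writes $K$ as an oscillatory integral over the $(x',\xi')$ phase space, performs the $\xi'$-integration by stationary phase (the Hessian in $\xi'$ is comparable to $x_1$ times the second fundamental form of the characteristic surface, hence has $n-2$ eigenvalues of size $\mu|x_1|$ and one of size $\mu\theta|x_1|$), and absorbs the Gaussian cutoffs and the smooth error terms of the wave-packet parametrix using the nested summability / almost-orthogonality of the packets, exactly as in \S\S3--6 of \cite{SmSoBdry}. Once this bound is in hand, the remaining steps are the routine $TT^*$ and interpolation arguments sketched above, and the treatment of the error terms $G$ and the weighted $L^2$ norms on the right-hand sides of \eqref{E:mubound1}--\eqref{E:mubound2} is handled, as in \cite{SmSoBdry}, by the same packet decomposition together with the observation that the weight $\langle\mu^{1/2}x_2\rangle$ controls the spreading of a single packet transverse to its direction of propagation.
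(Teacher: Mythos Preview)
Your overall $TT^*$ scheme is right, but the dispersive bound you state is incorrect, and this is the crux of the proof. You claim
$$
\|K(t,\cdot\,;s,\cdot)\|_{L^\infty}\lesssim \mu^{n}\,(1+\mu|t-s|)^{-\frac{n-1}{2}}(1+\mu\theta|t-s|)^{-\frac12}\,,
$$
but the correct bound (the paper's \eqref{E:dispest}) is
$$
\|K(t,\cdot\,;s,\cdot)\|_{L^\infty}\lesssim \mu^{n}\theta\,(1+\mu|t-s|)^{-\frac{n-2}{2}}(1+\mu\theta^2|t-s|)^{-\frac12}\,.
$$
Your version is actually \emph{false}: for $\mu\theta^2|t-s|\approx 1$ it predicts $|K|\lesssim \mu^n\theta^{n-1/2}$, whereas the true size is $\mu^n\theta^{n-1}=\mu^{(n+1)/2}|t-s|^{-(n-1)/2}$, which is larger by $\theta^{-1/2}$. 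More conceptually, your total decay exponent $\tfrac{n}{2}$ exceeds the maximal $\tfrac{n-1}{2}$ available for a wave-type symbol whose characteristic surface (a sphere in $\RR^n$) has only $n-1$ nonvanishing principal curvatures. The heuristic you invoke---that the $x_n$-group velocity is $\approx\theta$, hence one factor degrades to $(1+\mu\theta|t-s|)^{-1/2}$---conflates first derivatives of $q$ (transport) with second derivatives (dispersion). The sphere has \emph{isotropic} curvature; what the $\xi_n\approx\theta\mu$ localization really does is restrict one frequency variable to a band of width $\theta\mu$, so stationary phase in that direction only takes effect once $\mu\theta^2|t-s|\gtrsim 1$, and until then one simply gains the volume factor $\theta\mu$ in place of the stationary-phase factor $(\mu/|t-s|)^{1/2}$. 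This is exactly what produces the $\theta$ prefactor, the exponent $\tfrac{n-2}{2}$, and the threshold $\mu\theta^2|t-s|$.

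The paper does not attempt a direct stationary-phase argument on $K$; instead it performs a second-microlocal angular decomposition $\beta_\theta=\sum_j\beta_j$ into cones of a carefully chosen aperture $\bar\theta$ (equal to $(\mu|t-s|)^{-1/2}$ when $\mu\theta^2|t-s|\ge 1$, and $\min((\mu|t-s|)^{-1/2},1)$ in the other regime, applied only in the $\zeta''=(\zeta_2,\ldots,\zeta_{n-1},\zeta_{n+1})$ variables). Each piece $K_j$ is then controlled pointwise by the packet estimates of \cite[Theorem 5.4]{SmSoBdry} and \cite[p.~152]{SmSoBdry}, and the sum over $j$ is handled by the $(\mu\bar\theta)^{-1}$-separation of the transported centers $x'_{s,t,j}$. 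With the correct dispersive bound in hand, the interpolation and Hardy--Littlewood--Sobolev steps go exactly as you describe and yield the two branches of $\sigma(p,q)$; but your bookkeeping cannot close as written because the input is off by the powers of $\theta$ noted above.
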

\noindent{\it Proof.}
The function $W\!f$ is frequency localized to 
$\xi_n \approx \theta$ and $|\xi| \approx \mu \theta$ 
(respectively $|\xi_n| \leq  \mu^{\frac 12} $ when $\theta
\approx \mu^{-\frac 12}$).  By duality, it suffices to show the
estimate
\begin{equation}\label{E:duality}
\|W W^*F\|_{L^p L^q} \lesssim
\mu^{2\gamma}\theta^{2\sigma(p,q)} \|F\|_{L^{p'}L^{q'}}.
\end{equation}
for $\xi'$-frequency localized $F$. We use $t$ and $s$ in place of
$x_1$ and $y_1$ for ease of notation. Then
the operator $W W^*$ applied to $\xi'$-localized $F$
agrees with integration against the kernel
$$
K(t,x';s,y') = \mu^{\frac n2}\!\!\int e^{i\langle \zeta, x'-z\rangle -
i\langle \zeta_{s,t}, y'-z_{s,t} \rangle }
g(\mu^\hf(x'-z))\,g(\mu^\hf(y'-z_{s,t}))\,\beta_\theta(\zeta)\,dz\, d\zeta
$$
where $(z_{s,t},\zeta_{s,t})=\Theta_{s,t}(z,\zeta)$. To align with
the notation that $x'=(x_2,\ldots,x_{n+1})$ denote the
space parameters, we take $\zeta=(\zeta_2,\ldots,\zeta_{n+1})$.
Then $\beta_\theta(\zeta)$ is a smooth cutoff to the set
$$
\{\zeta:\zeta_{n+1}\approx\mu\,,|\zeta_j|\le c\mu\,,j=2,\ldots,n-1, 
\;\text{and}\;\;
\zeta_n\approx\theta\mu\}
$$
(respectively $|\zeta_n|\le \mu^{\frac 12}$ in case $\theta=\mu^{-\frac 12}$.)

Analogous to \cite[(7.1)-(7.2)]{SmSoBdry},
we establish the inequalities
\begin{equation}\label{E:energest}
\left\| \int K(t,x';s,y') f(y')\,dy' \right\|_{L^2_{x'}} \lesssim
\|f\|_{L^2_{y'}}.
\end{equation}
and
\begin{equation}\label{E:dispest}
\left\| \int K(t,x';s,y') f(y')dy \right\|_{L^\infty_{x'}} \lesssim
\mu^n \theta\,(1+\mu|t-s|)^{-\frac{n-2}{2}}
(1+\mu\theta^2|t-s|)^{-\hf} \|f\|_{L^1_{y'}}
\end{equation}
Interpolation then yields that
\begin{multline*}
\left\| \int K(t,x';s,y') f(y')\,dy' \right\|_{L^q_{x'}}\\
\lesssim (\mu^n \theta)^{1-\frac 2q
}(1+\mu|t-s|)^{-\frac{n-2}{2}(1-\frac 2q)}(1+\mu
\theta^2|t-s|)^{-\hf(1-\frac 2q)}\|f\|_{L^{q'}_{y'}}
\end{multline*}
In the case $\frac{n-2}{2}(1-\frac 2q) \leq \frac 2p \leq
\frac{n-1}{2}(1-\frac 2q)$, the exponent in the third factor on the
right can be
replaced by $\frac{n-2}{2}(1-\frac 2q)- \frac 2p \leq 0$, showing
that
\begin{equation*}
\left\| \int K(t,x';s,y') f(y')\,dy' \right\|_{L^q_{x'}}\lesssim
\mu^{2\gamma }\theta^{2((n-1)(\frac 12-\frac 1q) -\frac 2p)
} |t-s|^{-\frac 2p} \|f\|_{L^{q'}_{y'}}
\end{equation*}
In the case $\frac{n-2}{2}(1-\frac 2q) \geq \frac 2p$, we can ignore the last
factor and obtain the bound
\begin{equation*}
\left\| \int K(t,x';s,y') f(y')\,dy \right\|_{L^q_{x'}}\lesssim
\mu^{2\gamma }\theta^{2(\frac 12-\frac 1q) } 
|t-s|^{-\frac 2p} \|f\|_{L^{q'}_{y'}}
\end{equation*}
In both cases, the Hardy-Littlewood-Sobolev inequality then
establishes~\eqref{E:duality}. 

The inequality~\eqref{E:energest} is estimate \cite[(7.1)]{SmSoBdry},
which follows from the fact that $T_\mu$ is an isometry and 
$\Theta_{t,s}$ is a
measure-preserving diffeomorphism.  Hence it suffices to
prove~\eqref{E:dispest}. As in \cite{SmSoBdry}, we consider two
cases.

In the case $\mu\theta^2|t-s|\ge 1$, we fix $\thetabar\le\theta$ 
so that $\mu\thetabar^2|t-s|=1$,
and decompose $\beta_\theta(\zeta)$ into a sum of cutoffs $\beta_j(\zeta)$, 
each of which
is localized to a cone of angle $\thetabar$ about some direction $\zeta_j$. 
The proof of \cite[Theorem 5.4]{SmSoBdry} yields that
$$
|K_j(t,x';s,y')|\,\lesssim\,
\mu^n\thetabar^{n-1}
\bigl(\,1+\mu\thetabar\,|y'-x'_{s,t,j}|\bigr)^{-N}\,,
$$
where $x'_{s,t,j}$ is the space component of $\Theta_{s,t}(x,\zeta_j)$.
For each fixed $(s,t)$ the $x'_{s,t,j}$ are
a $(\mu\thetabar)^{-1}$ 
separated set, and adding over $j$ yields the desired bounds, since
in this case
$$
\mu^n\thetabar^{n-1}=
\mu^{\frac {n+1}2}\,|t-s|^{-\frac{n-1}2}\lesssim
\mu^n \theta\,(1+\mu|t-s|)^{-\frac{n-2}{2}}
(1+\mu\theta^2|t-s|)^{-\hf}
$$

In case $\mu\theta^2|t-s|\le 1$, we let $\thetabar\ge\theta$ be given by
$$
\thetabar=\min\bigl(\,\mu^{-\frac 12}|t-s|^{-\frac 12}\,,\,1\,\bigr)\,.
$$
Following the proof of \cite[(7.2)]{SmSoBdry},
we set $\zeta''=(\zeta_2,\ldots,\zeta_{n-1},\zeta_{n+1})$,
and let $\beta_j$ be a partition of unity in cones of angle 
$\thetabar$ on $\RR^{n-1}$.
We then decompose
$$
\beta_\theta(\zeta)=\sum_j \beta_\theta(\zeta)\,\beta_j(\zeta'')\,,
$$
and let $K=\sum_j K_j$ denote the corresponding kernel decomposition. 

The arguments on page 152 of \cite{SmSoBdry} yield
$$
|K_j(t,x';s,y')|
\lesssim\mu^n\thetabar^{n-2}\,\theta\,
\bigl(\,1+\mu\thetabar\,|(y'-x'_{s,t,j})_{2,\ldots,n-1}|\,\bigr)^{-N}\,.
$$
The $x'_{s,t,j}$ are $(\mu\thetabar)^{-1}$
separated in the 
$(2,\ldots,n-1)$ variables as $j$ varies, and summing over $j$ yields
$$
|K(t,x';s,y')|
\lesssim\mu^n\theta\,\thetabar^{n-2}
\approx
\mu^n \theta\,(1+\mu|t-s|)^{-\frac{n-2}{2}}
(1+\mu\theta^2|t-s|)^{-\hf}\,.\qed
$$

\section{Applications to semilinear wave equations}\label{S:nlw}

As an application, we consider the following family of semilinear
wave equations with defocusing nonlinearity
\begin{align}\label{E:nlw}
\prtl_{t }^2 u - \Delta u + |u|^{r-1}u =0 & & (u, \prtl_t u)|_{t=0} =
(f,g) & & u|_{\prtl M } = 0,
\end{align}
or
\begin{align}\label{E:nlwn}
\prtl_{t }^2 u - \Delta u + |u|^{r-1}u =0 & & (u, \prtl_t u)|_{t=0} =
(f,g) & & \partial_\nu u|_{\prtl M } = 0,
\end{align}
We will be mostly interested in the range of exponents $r < 1
+\frac{4}{n-2}$ (energy subcritical) and $r = 1 +\frac{4}{n-2}$
(energy critical).

In the boundaryless case where $\Omega = \Rn$, the first results for the critical wave equation were obtained by Grillakis \cite{Grillakis}.  He showed that when $n=3$ there are global smooth solutions of the critical wave equation, $r=5$, if the data is smooth.  Shatah and Struwe \cite{ShatStru} extended his theorem by showing that there are global solutions for data lying in the energy space $H^1\times L^2$.  They also obtained results for critical wave equations in higher dimensions.

For the case of obstacles, the first results are due to Smith and Sogge \cite{SmSoCrit}.  They showed that Grillakis' theorem extends to the case where $\Omega$ is the complement of a smooth, compact, convex obstacle and Dirichlet boundary conditions are imposed, i.e. \eqref{E:nlw} for $r=5$.  Recently this result was extended to the case of arbitrary domains in $\Omega \subset \R^3$ and data in the energy space by Burq, Lebeau and Planchon \cite{blp}.  The case of nonlinear critical Neumann-wave equations in 3-dimensions, \eqref{E:nlwn}, was subsequently handled by Burq and Planchon \cite{bp}.

The proofs of the results for arbitrary domains in 3-dimensions used two new ingredients.  First, the estimates of Smith and Sogge \cite{SmSoBdry} for spectral clusters turned out to be strong enough to prove certain Strichartz estimates for the linear wave equations with either Dirichlet or Neumann boundary conditions.  Specifially, Burq, Lebeau and Planchon \cite{blp} showed that one can control the $L^5W_0^{\frac3{10},5}$ norm of the solution of (1.1) over $[0,1]\times \Omega$ in terms of the energy norm of the data, assuming that $\Omega$ is compact.  The other novelty was new estimates for the restriction of $u$ to the boundary, specifically Proposition 3.2 in \cite{blp} and Proposition 3.1 in \cite{bp}.  In the earlier case of convex obstacles and Dirichlet boundary conditions treated in \cite{SmSoCrit} such estimates were not necessary since for the flux arguments that were used to treat the nonlinear wave equation \eqref{E:nlw}, the boundary terms had a favorable sign.  We remark that by using the results in Theorem 1.1, we can simplify the arguments in \cite{blp} and \cite{bp} since we now have control of the $L^4_tL^{12}_x([0,1]\times \Omega)$ norms of the solution of (1.1) in terms of the energy norm of the data.  If this is combined with the aforementioned boundary estimates in \cite{blp} and \cite{bp} one can prove the global existence results in these papers by using the now-standard arguments that are found in \cite{SmSoCrit} for convex obstacles, and \cite{ShatStru}  and \cite{Sog} for the case where $\Omega=\R^3$.  In the next section we shall show how these $L^4_tL^{12}_x$ and the weaker $L^5_tL^{10}_x$ estimates can be used to show that there is scattering for \eqref{E:nlw} when $n=3$, $r=5$ and $\Omega$ is the compliment of a star-shaped obstacle.

Let us conclude this section by presenting another new result.  We shall show that the Strichartz estimates in Theorem 1.1 are strong enough to prove 
the following:

\begin{theorem}\label{exist}  Suppose that $\Omega\subset \R^4$ 
is a domain with smooth compact boundary.  If $1<r<3$ and $(f,g)\in (\dot H^1(\Omega)\cap L^{r+1}(\Omega))\times L^2(\Omega)$ then  \eqref{E:nlw} and \eqref{E:nlwn} have a unique global solution satisfying
$$u\in C^0\bigl([0,T]; \dot H^1(\Omega)\cap L^{r+1}(\Omega)\bigr)\cap C^1\bigl([0,T]; L^2(\Omega)\bigr)\cap L^3_tL^6_x\bigl([0,T]\times \Omega\bigr)$$
for every $T>0$.  If $r=3$ then the same result holds provided that the $(\dot H^1\cap L^4)\times L^2$ norm of $(f,g)$ is sufficiently small.
\end{theorem}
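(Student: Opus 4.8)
The plan is to run the standard Picard iteration / contraction mapping argument in the space
$$
X_T = C^0\bigl([0,T];\dot H^1(\Omega)\cap L^{r+1}(\Omega)\bigr)\cap C^1\bigl([0,T];L^2(\Omega)\bigr)\cap L^3_tL^6_x\bigl([0,T]\times\Omega\bigr),
$$
using Theorem \ref{T:homogstz} and Corollary \ref{T:inhomogstz} as the linear input. The relevant Strichartz pair in $n=4$ is $(p,q,\gamma)=(3,6,1)$: one checks $\tfrac1p+\tfrac nq = \tfrac13+\tfrac46 = 1 = \tfrac n2-\gamma$, and the subcritical condition $\tfrac1p+\tfrac1q=\tfrac13+\tfrac16=\tfrac12$ holds with equality, so $(3,6,1)$ is admissible. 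Thus the homogeneous estimate gives control of $\|u\|_{L^3_tL^6_x}$ by $\|f\|_{\dot H^1}+\|g\|_{L^2}$, and for the Duhamel term we need the dual exponents $(r',s')=(\tfrac32,\tfrac65)$ with $1-\gamma=0$ to be admissible; again $\tfrac{1}{r'}+\tfrac1{s'}=\tfrac23+\tfrac56 > \tfrac12$, so this is fine as a subcritical pair, giving $\left\|\int_0^t \tfrac{\sin((t-s)\sqrt{-\Delta})}{\sqrt{-\Delta}}F(s)\,ds\right\|_{L^3_tL^6_x}\lesssim \|F\|_{L^{3/2}_tL^{6/5}_x}$.

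The core nonlinear estimate is to bound $\||u|^{r-1}u\|_{L^{3/2}_tL^{6/5}_x}$. For $r=3$ the nonlinearity is $|u|^2u$, and Hölder gives $\||u|^2u\|_{L^{3/2}_tL^{6/5}_x} = \|u\|_{L^{9/2}_tL^{18/5}_x}^3$; one then interpolates $L^{9/2}_tL^{18/5}_x$ between $L^\infty_t\dot H^1 \hookrightarrow L^\infty_tL^4_x$ (Sobolev in $n=4$) and $L^3_tL^6_x$ to close the estimate, which is why the $r=3$ case requires smallness of the data. For $1<r<3$ one instead splits $|u|^{r-1}u = |u|^{r-1}u\cdot\mathbf 1_{|u|\le1}+|u|^{r-1}u\cdot\mathbf 1_{|u|>1}$ or, more cleanly, interpolates between the conserved energy bound and the Strichartz norm on a short time interval, so that a power of $T$ appears in front and the iteration closes for $T$ small; global existence then follows from the \emph{a priori} energy bound
$$
E(u)(t) = \int_\Omega \tfrac12|\partial_t u|^2 + \tfrac12|\nabla u|^2 + \tfrac1{r+1}|u|^{r+1}\,dx = E(u)(0),
$$
which is finite precisely because $(f,g)\in(\dot H^1\cap L^{r+1})\times L^2$, by iterating the local result on intervals of uniform length. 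Uniqueness follows from the same contraction estimate applied to the difference of two solutions. The Neumann case \eqref{E:nlwn} is handled identically since Theorem \ref{T:homogstz} and Corollary \ref{T:inhomogstz} hold verbatim for Neumann conditions, and the energy identity is likewise unchanged (no boundary term appears, as $\partial_\nu u=0$).

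The main obstacle, and the only place where the geometry of $\Omega$ enters beyond the linear estimates, is the \emph{a priori} energy bound and the propagation of the $L^{r+1}$ regularity: one must justify the energy identity for finite-energy solutions on a domain with boundary (a density/approximation argument, approximating $(f,g)$ by smooth compactly supported data and passing to the limit using the linear Strichartz bounds for stability), and one must check that the $L^{r+1}_x$ component of the norm is itself propagated — this is automatic from the energy conservation when $r+1\le 2^\ast = 4$ fails, i.e. here $r+1\le4$ only at $r=3$, so for $1<r<3$ one has $r+1<4$ and $\dot H^1\cap L^{r+1}$ control comes from interpolating the energy with $\dot H^1\hookrightarrow L^4$; for $r=3$, $L^{r+1}=L^4$ is exactly the Sobolev endpoint and $\dot H^1$ control alone suffices. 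A secondary technical point is that the Strichartz estimates of Theorem \ref{T:homogstz} are stated as \emph{local} in time, with constant depending on $T$; to obtain global existence one reapplies the fixed-$T$ estimate on successive unit intervals, using that the energy — hence the $\dot H^1\times L^2$ norm of the data at each restart — stays bounded, so the iteration length can be taken uniform.
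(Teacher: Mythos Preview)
Your overall strategy---iteration in $X_T$, the Strichartz pair $(p,q,\gamma)=(3,6,1)$, and globalization via energy conservation---matches the paper. But the inhomogeneous estimate you invoke is wrong, and this is a genuine gap.

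You place the forcing in $L^{3/2}_tL^{6/5}_x$ and claim this follows from Corollary~\ref{T:inhomogstz}. It does not. In that corollary the forcing lives in $L^r_tL^s_x$ and the hypothesis is that $(r',s',1-\gamma)$ satisfies the conditions of Theorem~\ref{T:homogstz}. With $\gamma=1$ and $(r,s)=(3/2,6/5)$ this means $(r',s',0)=(3,6,0)$ must be admissible; but the scaling relation $\tfrac1{r'}+\tfrac n{s'}=\tfrac n2-(1-\gamma)$ reads $\tfrac13+\tfrac46=1\ne 2$, so it fails. (Your ``check'' that $\tfrac1{r'}+\tfrac1{s'}>\tfrac12$ is both the wrong inequality and applied to the wrong exponents.) In fact the estimate $\|u\|_{L^3L^6}\lesssim\|\Box u\|_{L^{3/2}L^{6/5}}$ is false on $\RR^4$ by scaling. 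Separately, your interpolation of $L^{9/2}_tL^{18/5}_x$ between $L^\infty_tL^4_x$ and $L^3_tL^6_x$ is arithmetically incorrect: the time exponent forces $\theta=\tfrac13$, which gives spatial exponent $\tfrac{36}{7}$, not $\tfrac{18}{5}$.

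The paper instead places the forcing in $L^1_tL^2_x$, which \emph{does} satisfy Corollary~\ref{T:inhomogstz}: here $(r',s',1-\gamma)=(\infty,2,0)$, and one checks $\tfrac1\infty+\tfrac42=2=\tfrac42-0$ and $\tfrac1\infty+\tfrac12=\tfrac12$. This yields
\[
\|v\|_{L^3_tL^6_x((0,T)\times\Omega)}\lesssim \|v(0,\cdot)\|_{H^1}+\|\partial_t v(0,\cdot)\|_{L^2}+\int_0^T\|F(s,\cdot)\|_{L^2}\,ds\,.
\]
With this choice the nonlinear estimate is much cleaner: for $r=3$, $\||u|^2u\|_{L^1_tL^2_x}=\|u\|_{L^3_tL^6_x}^3$ directly, and for $1<r<3$ one interpolates $L^{2r}_x$ between $L^{r+1}_x$ (controlled by the energy) and $L^6_x$ to pick up a positive power of $T$. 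The rest of your argument (local existence by contraction, global by energy conservation on successive intervals, Neumann handled identically) is correct and is what the paper does.
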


The local existence results follow from the fact that Theorem~\ref{T:homogstz} implies that if $(\partial_t^2-\Delta) v=F$ and $v$ has either Dirichlet or Neumann boundary conditions then for $0<T<1$ there is a constant $C$ so that
\begin{equation}\label{loc4}
\|v\|_{L^3_tL^6_x((0,T)\times \Omega)}\le C\Bigl(\, \|v(0,\, \cdot\, )\|_{H^1}+\|\partial_t v(0,\, \cdot\, )\|_{L^2}+\int_0^T\|F(s,\, \cdot\, )\|_2\, ds\, \Bigr).
\end{equation}
If $\Omega$ is the complement of a bounded set, then estimate (3.3)
holds with $H^1$ replaced by $\dot{H}^1$, as can be seen by combining
the estimates for the case of compact $\Omega$ with the global
Strichartz estimates on $\R^4$, and using finite propagation velocity.
Using this estimate the theorem follows from a standard convergent iteration argument with $u$ in the space$$X=C^0((0,T); \dot H^1(\Omega)\cap L^{r+1}(\Omega))\cap C^1((0,T); L^2(\Omega))\cap L^3_tL^6_x((0,T)\times \Omega),$$
and $T$ being sufficiently small depending on the $(\dot{H}^1\cap L^{r+1})\times L^2$ norm of the initial data $(f,g)$ of either (3.1) or (3.2) for $1<r<3$, and $T$ depending on the
data in the critical case $r=3$. For data of sufficiently small norm,
one can obtain existence for $T=1$ for the critical case $r=3$.
Together with energy conservation, the above yields global existence
for $1<r<3$, and global existence for small data for $r=3$.

The analog of \eqref{loc4} when $n=3$ involves $L^5_tL^{10}_x$ in the left.  As we mentioned before, a stronger inequality involving $L^4_tL^{12}_x$ is valid when $n=3$ by Theorem~\ref{T:homogstz}.  Any such corresponding improvement of \eqref{loc4} when $n=4$ would lead to a global existence theorem for arbitrary data for the critical case where $r=3$, but, at present, we are unable to obtain such a result.

\section{Scattering for star-shaped obstacles in $3$-dimensions}

We now consider solutions to the energy critical nonlinear wave
equation in 3+1 dimensions in a domain $\Omega=\RR^3 \setminus
\mathcal{K}$ exterior to a compact, nontrapping obstacle
$\mathcal{K}$ with smooth boundary
\begin{align}
\Box u(t,x) &= (\prtl^2_t - \Delta) u(t,x) = - u^5(t,x),\qquad
(t,x) \in \RR \times \Omega  \notag\\
u\big|_{\RR\times \prtl \Omega} &= 0\label{nlw}\\
(\nabla u(t,\cdot), \prtl_t u(t,\cdot))& \in L^2(\Omega) \qquad t
\in \RR  \notag
\end{align}
We restrict attention to real-valued solutions $u(t,x)$.\\

When $\mathcal{K}$ is a nontrapping obstacle, the estimates above,
combined with those of Smith and Sogge \cite{SmSoGlobal} (see also Burq \cite{Burq}, Metcalfe \cite{Met}) imply
the following estimate on functions $w(t,x)$ satisfying
homogeneous Dirichlet boundary conditions
\begin{multline}\label{globstz}
\|w\|_{L^5(\RR; L^{10}(\Omega))} + \|w\|_{L^4(\RR;
L^{12}(\Omega))}\\ \leq C\left( \|\left(\nabla_{x} w(0,\cdot),
\prtl_t w(0,\cdot)\right) \|_{L^2(\Omega)} + \|\Box
w\|_{L^1(\RR;L^2(\Omega))} \right).
\end{multline}
In this section, we show how these global estimates can be used to
show that solutions to the nonlinear equation~\eqref{nlw} above
scatter to a solution to the homogeneous equation
\begin{align}
\Box v(t,x) &= 0,\qquad
(t,x) \in \RR \times \Omega  \notag\\
v\big|_{\RR\times \prtl \Omega} &= 0 \label{homog}\\
(\nabla v(t,\cdot), \prtl_t v(t,\cdot))& \in L^2(\Omega) \qquad t
\in \RR  \notag.
\end{align}

Let $\nu = \nu(x)$ denote the outward pointing unit normal vector
to the boundary at $x \in \prtl \mathcal{K}$.  We call the
obstacle $\mathcal{K}$ \emph{star-shaped with respect to the
origin} if $\nu(x) \cdot x \geq 0$ for all $x \in \prtl
\mathcal{K}$.  Define the energy functional
$$
E_0(v; t) =\frac 12 \int_\Omega |\nabla_x v(t,x)|^2 + |\prtl_t
v(t,x)|^2\;dx,
$$
and recall that $t \mapsto E_0(v;t)$ is conserved whenever $v$ is
a solution to the homogeneous equation~\eqref{homog}.  We show the
following:
\begin{proposition}\label{T:scatter}
Suppose $u$ solves the nonlinear problem~\eqref{nlw} and that
$\mathcal{K}$ is star-shaped with respect to the origin. Then
there exists unique solutions $v_{\pm}$ to~\eqref{homog} such that
\begin{equation}\label{asymnrg}
\lim_{t\to \pm \infty } E_0(u-v_{\pm};t) = 0.
\end{equation}
Moreover, $u$ satisfies the space-time integrability bound
\begin{equation}\label{sptimeint}
\|u\|_{L^5(\RR; L^{10}(\Omega))} + \|u\|_{L^4(\RR;
L^{12}(\Omega))} < \infty.
\end{equation}
\end{proposition}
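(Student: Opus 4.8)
The plan is to follow the standard route for the energy-critical defocusing wave equation: the crux is to establish the global space-time bound \eqref{sptimeint}, after which the scattering states $v_\pm$ are produced rather softly from the global Strichartz estimate \eqref{globstz} and Duhamel's formula. (Since a star-shaped obstacle is nontrapping, \eqref{globstz} is available here.) I would first record the global Cauchy theory and energy conservation cited in Section~\ref{S:nlw}: a solution of \eqref{nlw} exists for all $t\in\RR$, and the energy
\[
E \;=\; \tfrac12\int_\Omega\bigl(|\nabla_x u|^2+|\prtl_t u|^2\bigr)(t,x)\,dx+\tfrac16\int_\Omega|u(t,x)|^6\,dx
\]
is independent of $t$; in particular $\sup_t\|(\nabla_x u,\prtl_t u)(t)\|_{L^2(\Omega)}\lesssim E^{1/2}$.

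The quantitative use of the star-shaped hypothesis comes through a Morawetz estimate. I would apply the multiplier identity for \eqref{nlw} with the radial multiplier $\frac{x}{|x|}\cdot\nabla_x u+\frac{u}{|x|}$, integrate over $[-T,T]\times\Omega$, and integrate by parts. Because $u|_{\RR\times\prtl\mathcal{K}}=0$, the tangential derivatives of $u$ vanish on $\prtl\mathcal{K}$ and the boundary contribution reduces to a positive multiple of $\int\!\!\int_{\prtl\mathcal{K}}\bigl(\nu(x)\cdot\tfrac{x}{|x|}\bigr)|\prtl_\nu u(t,x)|^2\,dS(x)\,dt$, which is nonnegative precisely because $\mathcal{K}$ is star-shaped with respect to the origin. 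Discarding this term and letting $T\to\infty$ gives $\int_\RR\!\int_\Omega|u(t,x)|^6|x|^{-1}\,dx\,dt\lesssim E$. I would also record the conformal variant of this identity, with multiplier $(t^2+|x|^2)\prtl_t u+2t\,x\cdot\nabla_x u+2t\,u$, whose boundary term is likewise favorable for star-shaped $\mathcal{K}$; this form yields decay of the part of the energy lying outside light cones and is what actually forces non-concentration.

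With these a priori bounds in hand, the remainder is the non-concentration scheme of \cite{ShatStru} (and \cite{SmSoCrit} in the obstacle case), adapted to the exterior domain. Using the Morawetz/conformal estimates one rules out concentration of the conserved energy at any point of space-time, including points on or near $\prtl\mathcal{K}$; combined with the local Cauchy theory---small-data-in-$L^5_tL^{10}_x$ well-posedness, which follows from Theorem~\ref{T:homogstz} just as \eqref{loc4} does, with $L^5_tL^{10}_x$ in place of $L^3_tL^6_x$ (both being energy-level, sub-critical Strichartz pairs in dimension $3$)---and continuity of the flow, this shows that $\RR$ can be covered by finitely many intervals $I_1,\dots,I_N$ with $N=N(E)$, on each of which $\|u\|_{L^5_tL^{10}_x(I_j\times\Omega)}$ lies below the small-data threshold. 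On a fixed neighborhood of $\mathcal{K}$ the needed linear estimates are supplied by Theorem~\ref{T:homogstz}; away from $\mathcal{K}$ one uses the Euclidean theory and finite speed of propagation. Summing over $j$ gives $\|u\|_{L^5L^{10}(\RR\times\Omega)}<\infty$, and inserting this into \eqref{globstz}---noting $\|u^5\|_{L^1L^2}=\|u\|_{L^5L^{10}}^5$---then yields $\|u\|_{L^4L^{12}(\RR\times\Omega)}<\infty$, which is \eqref{sptimeint}.

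For the scattering statement \eqref{asymnrg}, let $\mathcal{W}(t)$ denote the solution operator of the homogeneous Dirichlet-wave equation on $\Omega$ and write Duhamel's formula $u(t)=\mathcal{W}(t)(u(0),\prtl_t u(0))-\int_0^t\mathcal{W}(t-s)(0,u^5(s))\,ds$. I would define the forward scattering data by adding in the full tail,
\[
(v_+(0),\prtl_t v_+(0))\;=\;(u(0),\prtl_t u(0))-\int_0^\infty\mathcal{W}(-s)(0,u^5(s))\,ds\,,
\]
the integral converging absolutely in $\dot H^1(\Omega)\times L^2(\Omega)$ since $\|u^5\|_{L^1((T,\infty);L^2)}=\|u\|_{L^5((T,\infty);L^{10})}^5\to0$ and $\mathcal{W}(-s)$ is uniformly bounded on the energy space. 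Then $v_+:=\mathcal{W}(\cdot)(v_+(0),\prtl_t v_+(0))$ solves \eqref{homog}, and $u(t)-v_+(t)=\int_t^\infty\mathcal{W}(t-s)(0,u^5(s))\,ds$, so the energy estimate for the inhomogeneous wave equation gives $E_0(u-v_+;t)^{1/2}\lesssim\|u\|_{L^5((t,\infty);L^{10})}^5\to0$ as $t\to+\infty$; the construction of $v_-$ is symmetric, and uniqueness is immediate since a nontrivial finite-energy solution of \eqref{homog} carries nonzero energy. I expect the main obstacle to be the interval-decomposition/non-concentration step: ruling out energy concentration uniformly over all of space-time---in particular up to and along the obstacle boundary---is exactly where the star-shaped hypothesis (through the favorable sign of the Morawetz boundary terms) and the boundary-compatible mixed-norm estimates of Theorem~\ref{T:homogstz} are indispensable.
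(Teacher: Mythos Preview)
Your construction of $v_\pm$ from the global space-time bound \eqref{sptimeint} via Duhamel is correct and matches the paper's argument. The divergence from the paper is in how you establish \eqref{sptimeint} itself.

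The paper does not go through a Shatah--Struwe non-concentration scheme. Instead, it proves a single decay lemma (Lemma~\ref{T:l6decay}, following Bahouri--Shatah \cite{bahourishatah}): the Morawetz-type identity obtained by contracting the stress-energy tensor with the scaling vector field $t\,\prtl_t - x\cdot\nabla_x$ (plus a lower-order correction) over truncated cones yields $\|u(t,\cdot)\|_{L^6(\Omega)}\to 0$ as $t\to\infty$. The star-shaped hypothesis enters exactly here, to give the obstacle boundary term a favorable sign. Once this decay is known, one interpolates $\|u(t)\|_{L^{10}_x}^5\le \|u(t)\|_{L^6_x}\|u(t)\|_{L^{12}_x}^4$ and runs a continuity argument on $[T,S]$ with $T$ large: \eqref{globstz} gives
\[
\|u\|_{L^5L^{10}([T,S])}+\|u\|_{L^4L^{12}([T,S])}\le CE + C\varepsilon\,\|u\|_{L^4L^{12}([T,S])}^4,
\]
hence a uniform bound on $[T,\infty)$; the bound on $[-T,T]$ follows from the already-established global existence.

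Your route has a gap at the interval-decomposition step. Ruling out energy concentration at each finite space-time point is the mechanism for \emph{global existence} (already part of the hypothesis here), not for scattering: it yields, for each compact time interval, a covering by finitely many good sub-intervals, but the number of such intervals is not controlled as the interval exhausts $\RR$. Your claim that $\RR$ is covered by $N=N(E)$ intervals with small $L^5L^{10}$ norm is essentially the quantitative scattering statement one is trying to prove, and it does not follow from non-concentration plus local theory plus continuity. (In free space this is the Bahouri--G\'erard theorem \cite{bahourigerard}, proved by profile decomposition; no such machinery is invoked or needed here.) The classical Morawetz bound $\int\!\!\int |u|^6|x|^{-1}\,dx\,dt<\infty$ that you record does not close the gap because the $|x|^{-1}$ weight gives no control at spatial infinity. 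What is actually required is decay in time of the potential energy, and that is exactly what Lemma~\ref{T:l6decay} supplies. Your conformal multiplier is in the right circle of ideas, but you should use it (or, as the paper does, the scaling multiplier combined with a flux argument) to prove $\|u(t)\|_{L^6}\to 0$ directly, and then run the continuity argument above rather than invoking a finite-interval decomposition.
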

When $\mathcal{K}=\varnothing$, this follows from the observations
of Bahouri and G\'{e}rard~\cite{bahourigerard}.  We also remark
that when $\mathcal{K}$ is convex, similar results for compactly
supported, subcritical nonlinearities were obtained by Bchatnia
and Daoulatli~\cite{bchatnia}.\\

Attention will be restricted to the $v_+$ function, as symmetric
arguments will yield the existence of a $v_{-}$ asymptotic to $u$
at $-\infty$.  As observed in~\cite{bahourigerard}, we actually
have that~\eqref{asymnrg} follows as a consequence
of~\eqref{sptimeint}.  We first establish the existence of the
wave operator, 
namely that for any solution $v$ to~\eqref{homog}, there exists a
unique solution $u$ to~\eqref{nlw} such that
$$
\lim_{t \to \infty}E_0(u-v;t) = 0.
$$
Given~\eqref{globstz}, for any $\delta >0$ we may select $T$ large
so that $\|v\|_{L^5([T,\infty);L^{10}(\Omega))} \leq \delta$.
Given any $w(t,x)$ satisfying
$\|w\|_{L^5([T,\infty);L^{10}(\Omega))} \leq \delta$, we have a
unique solution to the linear problem
\begin{align*}
&\Box \tilde{w} = -(v+w)^5 \\
&\lim_{t\to \infty}E_0(\tilde{w};t) =0
\end{align*}
as the right hand side is in $L^1([T,\infty);L^2(\Omega))$.  The
estimate~\eqref{globstz} then also ensures that
$$
\|\tilde{w}\|_{ L^5([T,\infty);L^{10}(\Omega))} \leq C
\|v+w\|_{ L^5([T,\infty);L^{10}(\Omega))}^5 \leq 32C \delta^5.
$$
Hence 
for $\delta$ sufficiently small, the map $w \to \tilde w$ is seen to be a contraction on the
ball of radius $\delta$ in 
$L^5([T,\infty);L^{10}(\Omega))$.  The unique fixed point $w$ can
be uniquely extended over all of $\RR \times \Omega$. Hence taking
$u=v+w$ shows existence of the wave operator.\\

To see that the wave operator is surjective, we need a decay
estimate which establishes that the nonlinear effects of the
solution map for~\eqref{nlw} diminish as time evolves.
\begin{lemma}\label{T:l6decay}
Let $\mathcal{K}$ be star-shaped with respect to the origin.  If
$u(t,x)$ solves~\eqref{nlw}, then the following decay estimate
holds
$$
\lim_{t \to \infty} \frac 16 \int_\Omega |u(t,x)|^6 \;dx = 0.
$$
\end{lemma}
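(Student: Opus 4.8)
The plan is to establish the $L^6$ decay via a Morawetz-type (conformal/dilation) identity adapted to the exterior domain, exploiting the star-shapedness to control the boundary contribution. The key is that for the energy-critical equation in $3+1$ dimensions, the conformal energy, built from the vector field $K_0 = (t^2+|x|^2)\partial_t + 2tx\cdot\nabla_x$ together with the scaling correction, yields a monotone quantity whose time-derivative contains a good spacetime term, plus a boundary term on $\RR\times\prtl\mathcal K$ that is controlled by $\nu(x)\cdot x\ge 0$. Concretely, I would multiply \eqref{nlw} by the conformal multiplier $(t^2+|x|^2)\partial_t u + 2t\,x\cdot\nabla u + 2t\,u$, integrate over $[0,t]\times\Omega$, and integrate by parts.

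The main steps, in order, are as follows. First, record that finite propagation speed together with the wave operator construction (already established above) shows $u$ has finite conformal energy at some large time, or alternatively use that $u\in L^5_tL^{10}_x$ on compact time intervals (from Theorem~\ref{T:homogstz} and a local-existence argument) to justify all the integrations by parts. Second, carry out the conformal-multiplier computation: the bulk terms assemble into $\frac{d}{dt}$ of the conformal energy $\mathcal E_{\mathrm{conf}}(t)$, which is nonnegative and (by the conformal invariance of the $u^5$ nonlinearity in $n=3$) involves a term $\int_\Omega (t^2+|x|^2)\frac16|u|^6\,dx$; the error terms that are not a perfect time-derivative are, crucially, $\int_\Omega |u|^6\,dx$ with a favorable sign, or are absorbable. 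Third, and this is where star-shapedness enters, the boundary integral over $\RR\times\prtl\mathcal K$ reduces—after using the Dirichlet condition $u|_{\prtl\Omega}=0$, which forces $\nabla u$ to be normal there—to $-\int\int (\nu(x)\cdot x)\,t\,|\prtl_\nu u|^2\,dS\,dt$, which has the right sign for $t\ge 0$ when $\nu(x)\cdot x\ge 0$. Fourth, conclude: monotonicity of $\mathcal E_{\mathrm{conf}}(t) + (\text{sign-definite boundary flux})$ bounds $\int_0^\infty \frac1t\int_\Omega |u(t,x)|^6\,dx\,dt$ (or an analogous weighted integral), and since $t\mapsto \int_\Omega|u|^6$ is not wildly oscillating—its derivative is controlled by the energy and the $L^5_tL^{10}_x$ norm—this integrability forces $\int_\Omega|u(t,x)|^6\,dx\to 0$ along a sequence, upgraded to a full limit by a continuity argument.

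The main obstacle I expect is the careful treatment of the boundary terms and the justification of the integration by parts near $\prtl\mathcal K$: the conformal multiplier grows like $|x|^2$, so one must be sure the solution and its derivatives have enough decay in $x$ (which follows from finite speed of propagation from compactly supported—or suitably decaying—data, so the relevant integrals are effectively over $|x|\lesssim t$), and one must verify that the Dirichlet trace identity correctly isolates the single sign-definite boundary term without leftover indefinite pieces. A secondary technical point is handling the case of non-compactly-supported energy-space data: here one approximates by compactly supported data, derives the estimate uniformly, and passes to the limit using the global Strichartz bound \eqref{globstz}. The Neumann case is not at issue here since Lemma~\ref{T:l6decay} is stated only for the Dirichlet problem~\eqref{nlw}.
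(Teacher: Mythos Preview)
Your approach via the conformal vector field $K_0=(t^2+|x|^2)\partial_t+2t\,x\cdot\nabla_x$ differs from the paper's.  The paper instead contracts the stress--energy tensor with the \emph{scaling} field $t\partial_t+x\cdot\nabla_x$ (together with the lower-order correction $u$), integrates over a truncated solid cone $\{x\in\Omega:\ |x|\le t,\ T_1\le t\le T_2\}$, and uses that the resulting current has spacetime divergence exactly $-\tfrac13 u^6$.  Combining this with a Hardy inequality on $\Omega$, the star-shaped boundary sign, and smallness of the flux through the lateral mantle, one obtains $T\int_{|x|\le T}\tfrac16|u(T,x)|^6\,dx\le C\varepsilon T E+o(T)$ after choosing $T_1=\varepsilon T$; dividing by $T$ gives the decay directly, with no oscillation or continuity argument.

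Your outline has a genuine gap in step four.  For the energy-critical nonlinearity in $3+1$ dimensions the equation is conformally invariant, so the conformal charge built from $K_0$ (with the correction $2tu$ included) is \emph{exactly conserved}: the spacetime divergence of the associated current vanishes identically, and no bulk term of the shape $\int_0^\infty\tfrac1t\int_\Omega|u|^6\,dx\,dt$ is produced.  The assertion that ``the error terms that are not a perfect time-derivative are $\int_\Omega|u|^6\,dx$ with a favorable sign'' is therefore incorrect for this multiplier, and the subsequent continuity argument has nothing to feed on.  One \emph{can} extract $L^6$ decay from conformal conservation, but by a different mechanism: the conserved charge dominates $t^2\int_{|x|\le t}|u|^6\,dx$, which forces decay once the charge is finite.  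That in turn requires $\int_\Omega|x|^2\bigl(|\nabla f|^2+g^2\bigr)\,dx<\infty$, which fails for generic $\dot H^1\times L^2$ data and would need an approximation you have not set up.  The scaling-multiplier argument over the truncated cone sidesteps both of these issues.
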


When $\mathcal{K} = \varnothing$, this is due to Bahouri and
Shatah~\cite{bahourishatah}.  The proof below is essentially
theirs, with slight modifications made to handle the boundary
conditions.  However, for the sake of completeness, we replicate
the full proof below.  
We remark that the approach has its roots in arguments of 
Morawetz~\cite{morawetz}, and is related to other wrks regarding the decay of local energy for linear solutions in domains exterior to a star-shaped obstacle.
\\

To see that this implies the proposition, observe that given any
$\varepsilon>0$, there exists $T$ sufficiently large such that
$$
\sup_{t \geq T} \|u(t,\cdot)\|_{L^6} < \varepsilon.
$$
Hence for any $S>T$ we obtain the following for any solution $u$
to~\eqref{nlw}
\begin{multline*}
\|u\|_{L^5([T,S]; L^{10}(\Omega))} + \|u\|_{L^4([T,S];
L^{12}(\Omega))} \leq C \left( E + \|u^5\|_{L^1([T,S];
L^{2}(\Omega))} \right) \\
\leq CE + C\varepsilon\|u\|_{L^4([T,S]; L^{12}(\Omega))}
\end{multline*}
where $E$ denotes the conserved quantity
$$
E = E(t) = \int_\Omega \frac 12 |\nabla u (t,x)|^2 + \frac 12
|\prtl_t u(t,x)|^2 + \frac 16 |u(t,x)|^6\;dx.
$$
A continuity argument now yields $\|u\|_{L^5([T,\infty);
L^{10}(\Omega))} + \|u\|_{L^4([T,\infty); L^{12}(\Omega))}< 2CE$
and by a time reflection argument,~\eqref{sptimeint} follows.
However, this implies that the linear problem
$$
\Box w = -u^5 \qquad \qquad \lim_{t\to \infty } E_0(w;t) = 0
$$
admits a solution, showing that the wave operator is indeed
surjective as $v=u-w$ is the desired solution to~\eqref{homog}.

\begin{proof}[Proof of Lemma~\ref{T:l6decay}]
By a limiting argument it suffices to consider smooth, classical
solutions $u$ which decay at infinity.  We must show that for for
any $\varepsilon_0>0$, there exists $T_0$ such that whenever $t
\geq T_0$,
$$ \frac{1}{6}\int_\Omega |u(t,x)|\;dx \leq \varepsilon_0.
$$
Consider the stress energy tensor associated with $u$ (see
Tao~\cite{tao}, p. 149)
\begin{align*}
T^{00} &= \frac 12 (\prtl_t u )^2 +\frac 12 |\nabla u|^2 + \frac
16 u^6\\
T^{0j} &= - \prtl_t u \prtl_{x_j} u & 1 \leq j \leq 3\\
T^{jk} &= \prtl_{x_j} u \prtl_{x_k} u
-\frac{\delta_{jk}}{2}(|\nabla u|^2-(\prtl_t u)^2 + \frac 13 u^6)
& 1 \leq j, k \leq 3.
\end{align*}
It can be checked that the divergence free property holds
\begin{equation*}
\prtl_t T^{00} + \prtl_{x_j} T^{0j} =0 \qquad \prtl_t T^{0j} +
\prtl_{x_k} T^{jk} =0
\end{equation*}
with the summation convention in effect.  Taking the first of
these identities and applying the divergence theorem to a region
$\{0 \leq t \leq T, |x| \geq R+t \}$ (with $R >0$ large enough so
that $\mathcal{K} \subset B_R(0)$) we have
\begin{multline}\label{smallflux}
\int_{|x| \geq R + T} \frac 12 |\prtl_t u(T,x)|^2+\frac 12 |\nabla
u(T,x)|^2 + \frac 16 |u(T,x)|^6\;dx + \frac{1}{\sqrt{2}} \text{
flux}(0,T)\\ \leq \int_{|x| \geq R} \frac 12 |\prtl_t
u(0,x)|^2+\frac 12 |\nabla u(0,x)|^2 + \frac 16 |u(0,x)|^6\;dx
\end{multline}
where
$$
\text{ flux}(a,b) := \int_{M^a_b} \frac
12\left|\frac{x}{|x|}\prtl_t u + \nabla u\right|^2 +
\frac{|u|^6}{6}\;d\sigma
$$
$$
M^a_b := \{ a < t< b, |x|=R+t\}
$$
Since the solution has finite energy, we may select $R$ large so
that the right hand side of~\eqref{smallflux} is less than
$\frac{\varepsilon_0}{40}$ (and again $\mathcal{K} \subset
B_R(0)$). By time translation, $t \mapsto t+R$, it will suffice to
show the existence of $T_0$ such that whenever $t >T_0$ we have
$$
\frac{1}{6}\int_{x\in \Omega : |x|\leq t} |u(t,x)|\;dx \leq
\frac{\varepsilon_0}{2}
$$
(the additional smallness in the right hand side
of~\eqref{smallflux} will be used later in the proof).\\

We now define the following vector field $X=(X^0,X^1,X^2,X^3)$ by
contracting the stress-energy tensor with the null vector field $t
\prtl_t - x \cdot \nabla_x$ and adding a correction term
\begin{align*}
X^0 &= tT^{00} - x_k T^{0k} + u \prtl_t u\\
X^j &= tT^{j0} - x_k T^{jk} - u \prtl_{x_j} u \qquad 1 \leq j \leq
3.
\end{align*}
The space-time divergence of $X$ satisfies
$$
\text{div}(X) = -\frac 13 u^6.
$$

We now apply the divergence theorem over the truncated cone
$K^{T_2}_{T_1}= \{x\in \Omega: \, |x| \leq t, T_1 \leq t \leq T_2 \}$
\begin{align*}
0&=\int_{D(T_2)}X^0\;dx - \int_{D(T_1)}X^0\;dx -
\int_{M_{T_1}^{T_2}} \left( X^0 - \sum_{j=1}^3 \frac{x_j}{|x|}X^j
\right) \;d\sigma\\ & + \int_{K_{T_1}^{T_2}} \frac{|u|^6}{3}\;dx\;
dt -\int_{\prtl \Omega} \nu \cdot \langle X^1, X^2, X^3
\rangle \;d\sigma\\
&= I + II + III + IV + V
\end{align*}
where $d\sigma$ denotes Lebesgue measure on the corresponding
surface and $D(T_i) = \{x \in \Omega : |x| \leq T_i\}$,
and $M_{T_1}^{T_2}=\{ \, |x|=t, T_1\le t\le T_2\}$.  The
star-shaped assumption is crucial in controlling the last term
$V$.  Indeed, consider the restriction of the integrand in $V$ to
the boundary $\prtl \Omega (= \prtl \mathcal{K})$ and observe that the
Dirichlet boundary condition gives
\begin{align*}
\nu \cdot \langle X^1, X^2, X^3 \rangle  &= -\sum_{1 \leq j,k \leq
3} \nu_j x_k \left( \prtl_{x_j} u \prtl_{x_k} u
-\frac{\delta_{jk}}{2}|\nabla u|^2 \right)\\
& = -\left( \nu \cdot \nabla u \right) \left( x \cdot \nabla u
\right) + \frac 12 (\nu \cdot x) |\nabla u|^2.
\end{align*}
We have that $\nabla u $ is normal to $\prtl \Omega$ and hence
$|\nabla u|^2=(\nu \cdot \nabla u)^2$.  Treating $x$ as a vector,
we can project it on to the subspace orthogonal to $\nu$ obtaining
$$
0 = \nabla u \cdot (x - (\nu \cdot x) \nu) = x \cdot \nabla u -
(\nu \cdot x )(\nu\cdot \nabla u).
$$
This now gives
$$
\nu \cdot \langle X^1, X^2, X^3 \rangle = - \frac 12 \left( \nu
\cdot x \right) \left( \nu \cdot \nabla u \right)^2 \leq 0
$$
and since $IV \geq 0$ is clear,
$$
0 \geq I + II + III.
$$

We now impose polar coordinates $(r,\omega) \in \RR \times
\mathbb{S}^2$ on the third term, writing
$$
III=-\frac{1}{\sqrt{2}} \int_{M_{T_1}^{T_2}} \left( r(\prtl_t u +
\prtl_r u)^2 + u(\prtl_t u + \prtl_r u ) \right)\;d\sigma
$$
where $\prtl_r = \frac{x}{|x|} \cdot \nabla$ denotes the radial
derivative.  Next parameterize $M_{T_1}^{T_2}$ by $(r,\omega) \to
(r,r\omega)$ and set $v(y) = u(|y|,y)$ (or $v(r\omega) =
u(r,r\omega)$ in polar coordinates) so that we may write concisely
\begin{align*}
III & = - \int_{\mathbb{S}^2}\int_{T_1}^{T_2} r \left(\prtl_r v +
\frac vr \right)^2 r^2\;dr\;d\omega +
\int_{\mathbb{S}^2}\int_{T_1}^{T_2} \frac 12 \prtl_r\left( r^2 v^2 \right)\;dr\;d\omega\\
& = - \int_{\mathbb{S}^2}\int_{T_1}^{T_2} r \left(\prtl_r v +
\frac vr \right)^2 r^2\;dr\;d\omega + \frac 12 \int_{\mathbb{S}^2}
T_2^2 v^2(T_2 \omega)\;d\omega - \frac 12 \int_{\mathbb{S}^2}
T_1^2 v^2(T_1 \omega)\;d\omega
\end{align*}

To handle the first term $I$, first observe that in polar
coordinates
\begin{equation*}
|\nabla u|^2 = (\prtl_r u)^2 + \frac{1}{r^2}|\nabla_\omega u|^2 =
(\prtl_r u + \frac 1r u )^2 + \frac{1}{r^2}|\nabla_\omega u|^2 -
\frac 1{r^2} \prtl_r (ru^2).
\end{equation*}
Since $\mathcal{K}$ is star-shaped we may parameterize $\prtl
\Omega$ by $(r,\omega)=(\Psi(\omega),\omega)$ where $\Phi$ is a
real valued function on $\mathbb{S}^2$. This allows us to write
\begin{align}
I &= \int_{D(T_2)} \frac{T_2}{2} \left( (\prtl_t u)^2 +
\left(\prtl_r u + \frac 1r u \right)^2 +
\frac{1}{r^2}|\nabla_\omega u|^2 + \frac 13 u^6 \right) +
r\left(\prtl_r + \frac 1r u\right) \prtl_t u\;dx\notag\\
&\phantom{=}-\frac 12 \int_{\mathbb{S}^2}
\int_{\Psi(\omega)}^{T_2} T_2 \prtl_r (ru^2)\;dr\;d\omega
\label{intI}
\end{align}
Integrating by parts in the last term yields cancellation with one
of the terms in $III$ as the boundary condition gives $-\frac 12
\int_{\mathbb{S}^2} \int_{\Psi(\omega)}^{T_2} T_2 \prtl_r
(ru^2)\;dr\;d\omega = - \frac 12 \int_{\mathbb{S}^2} T_2^2 v^2(T_2
\omega)\;d\omega $. Similarly,
\begin{align*}
II &= -\int_{D(T_1)} \frac{T_1}{2} \left( (\prtl_t u)^2 +
\left(\prtl_r u + \frac 1r u \right)^2 +
\frac{1}{r^2}|\nabla_\omega u|^2 + \frac 13 u^6 \right) +
r\left(\prtl_r + \frac 1r u\right) \prtl_t u\;dx\\
&\phantom{=}+\frac 12 \int_{\mathbb{S}^2} T_1^2 v^2(T_1
\omega)\;d\omega
\end{align*}

In order to control remaining term in $I$ we need to observe the
following Hardy inequality, which holds in the exterior domain
\begin{equation}\label{hardy}
\int_\Omega \frac{|u|^2}{|x|^2} \;dx \leq 4 \int_\Omega |\nabla
u|^2 \;dx.
\end{equation}
To see this, we assume $u$ is real-valued and denote the integral
on left hand side as $J$.  Converting to polar coordinates
$$
J=\int_{\mathbb{S}^2} \int_{\Psi(\omega)}^\infty (u(r\omega))^2
\;dr\;d\omega = \int_{\mathbb{S}^2} r
u(r\omega)^2\Big|_{\Psi(\omega)}^\infty \;d \omega -
\int_{\mathbb{S}^2} \int_{\Psi(\omega)}^\infty 2u ( \prtl_r u) r
\;dr\;d\omega.
$$
The first term on the right is nonpositive (provided $u$ exhibits
sufficient decay at infinity) and Cauchy-Schwartz on the second
term gives
$$
J \leq 2 \sqrt{J}\left( \int_{\mathbb{S}^2}
\int_{\Psi(\omega)}^\infty |\prtl_r u|^2 r^2
\;dr\;d\omega\right)^{\frac 12}.
$$
The inequality~\eqref{hardy} now follows.

We now observe that the first integral in~\eqref{intI} is bounded
below by $T_2 \int_{D(T_2)} \frac{|u|^6}{6}\;dx$. Setting $T_2=T
>0$ and $T_1 = \varepsilon T$ ($0<\varepsilon<1$) and using the Hardy
inequality~\eqref{hardy} to control the first integral in $II$ now
yields
\begin{equation*}
T \int_{D(T)} \frac{|u|^6}{6}\;dx \leq C \varepsilon T E +
\int_{\varepsilon T}^T \int_{\mathbb{S}^2} T\left(\prtl_r v +
\frac vr \right)^2 r^2 \;d\omega\;dr.
\end{equation*}
Here $E$ is the conserved quantity $E=E(t)=\int_\Omega
T^{00}(t,x)\;dx$.  We can now divide both sides of this inequality
by $T$ and choose $\varepsilon$ sufficiently small so that
$C\varepsilon E \leq \varepsilon_0/4$, leaving us to control the
integral involving $v$.  However, by the proof of the Hardy
inequality above we have
$$
\int_{\varepsilon T}^T \int_{\mathbb{S}^2} \left(\prtl_r v + \frac
vr \right)^2 r^2 \;d\omega\;dr \leq 10 \int_{\varepsilon T}^\infty
\int_{\mathbb{S}^2} \left(\prtl_r v \right)^2 r^2 \;d\omega\;dr
\leq 10 \text{ flux}(\varepsilon T,\infty) <
\frac{\varepsilon}{4},
$$
provided $T$ is large enough so that $\varepsilon T>R$.
\end{proof}


\end{document}